
%
\documentclass[11pt]{amsart}
\usepackage{amsfonts}
\usepackage{amssymb}
\usepackage{graphicx}


\def\a{\alpha}       \def\b{\beta}        

     \def\l{\lambda}       
               \def\z{\zeta}

\def\D{{\mathbb D}}  
\def\C{{\mathbb C}}

\def\({\left(}       \def\){\right)}

\newtheorem{prop}{\sc Proposition}
\newtheorem{lem}{\sc Lemma}
\newtheorem{thm}{\sc Theorem}
\newtheorem{cor}{\sc Corollary}

\newtheorem{other}{\sc Theorem}              
\newenvironment{pf}{\noindent{\textit{Proof. }}}{$\Box$ }

\begin{document}
\title[Affine and linear invariant families]
{Affine and linear invariant families of harmonic mappings}

\author[M. Chuaqui]{Martin Chuaqui}
\address{Facultad de Matem\'aticas, Pontificia Universidad Cat\'olica de Chile, Casilla 306, Santiago, Chile.} \email{mchuaqui@mat.puc.cl}

\author[R. Hern\'andez]{Rodrigo Hern\'andez}
\address{Facultad de Ingenier\'{\i}a y Ciencias, Universidad Adolfo Ib\'a\~nez, Av. Padre Hurtado 750, Vi\~na del Mar, Chile.} \email{rodrigo.hernandez@uai.cl}

\author[M. J. Mart\'{\i}n]{Mar\'{\i}a J. Mart\'{\i}n}
\address{Department of Physics and Mathematics, University of Eastern Finland, P.O. Box 111, FI-80101 Joensuu, Finland.} \email{maria.martin@uef.fi}

\subjclass[2010]{30C55}
\keywords{Univalent function, harmonic mapping, Schwarzian derivative, affine and linear invariant family, order}
\date{\today}
\thanks{The authors are partially supported by grants Fondecyt $1110160$ and $1110321$, Chile. The second author also thankfully acknowledges partial
support from Faculty of Forestry and Sciences UEF, Finland (930349). The third author is supported by
Academy of Finland grant 268009 and by Spanish MINECO Research Project MTM2012-37436-C02-02.}

\begin{abstract} We study the order of affine and linear invariant families of planar harmonic mappings in the unit disk
and determine the order of the family of mappings with bounded Schwarzian norm. The result
shows that finding the order of the class $\mathcal{S}_H$ of univalent harmonic mappings can be formulated as a question
about Schwarzian norm and, in particular, our result shows consistency between the conjectured order of $\mathcal{S}_H$  and
the Schwarzian norm of the harmonic Koebe function.
\end{abstract}
\maketitle

\section*{Introduction}

The purpose of this paper is to study certain affine and linear invariant families of planar harmonic
mappings defined in the unit disk $\D$, with a special interest in the family of mappings with bounded
Schwarzian norm. The fundamental aspects of linear invariant families of harmonic mappings were studied
in \cite{S-S}, while linear invariant families of holomorphic mappings were introduced by Pommerenke in \cite{POM-I}.
Several important properties of such families of either holomorphic or harmonic mappings depend
on its order, namely the optimal bound for the second Taylor coefficient of the holomorphic part of the mappings
considered. The order of the class of holomorphic mappings with bounded Schwarzian norm can be determined by means of
a variational method that gives a relation for the second and third order coefficients of an extremal mapping \cite{POM-I}.
Our motivation in this paper stems from the still unresolved problem of determining the order of the family $\mathcal{S}_H$
of normalized univalent harmonic mappings. In this direction, we are able to apply the variational approach that leads
to the Marty relations to determine the order of the family of harmonic mappings with a given bound for
the Schwarzian derivative. This seems relevant because the order of the class $\mathcal{S}$ of normalized univalent
holomorphic mappings can be derived from the above coefficient relation and the well-known Schwarzian bound for the class.
Our result shows consistency between the conjectured values for order in $\mathcal{S}_H$ and in
$\mathcal{S}_H^0$ and the Schwarzian norm of the harmonic Koebe function, a natural candidate for maximizing the Schwarzian norm in $\mathcal{S}_H^0$.
The hyperbolic norm of the dilatations of the harmonic mappings enter in our analysis in an unexpected way, and
turn out to be intimately related to the order of the family.
The construction of an extremal mapping for our main result is not elementary and depends on a subtle interplay between the second coefficient and the hyperbolic norm. In another direction, we also establish the
sharp bound for the Schwarzian norm of certain important families of harmonic mappings for which the order
was already known.

\par

The \emph{Schwarzian derivative} of a locally univalent analytic function on a domain in the complex plane is
\[
Sf=\left(\frac{f''}{f'}\right)'-\frac 12\,\left(\frac{f''}{f'}\right)^2 \,.
\]
The role of $Sf$ in the study of univalence, distortion, and extensions of $f$ has been developed extensively in the literature (see, \emph{e.g.},
\cite{CH-O, Eps, Kraus, N}).
Two of the main properties of the Schwarzian derivative are the following:
\begin{itemize}
\item[i)] $Sf=0$ if and only if $f$ is a M\"{o}bius transformation.
\item[ii)] Whenever the composition $f\circ g$ is well-defined,  the \emph{chain rule} holds:
$$S(f\circ g)=(Sf\circ g)\cdot (g')^2+Sg\,.$$
\end{itemize}
In the case when $f$ is locally univalent in the unit disk $\D$, the \emph{Schwarzian norm}
$$\|Sf\|=\sup_{|z|<1} |Sf(z)|\cdot(1-|z|^2)^2$$ turns out to be invariant
under post-compositions with automorphisms $\sigma$ of the disk. In other words, for any such functions $f$ and $\sigma$,
$$\|S(f\circ\sigma)\|= \|Sf\|\,.$$
\par
In \cite{POM-I, POM-II} Pommerenke studied and carried through a detailed analysis of the so-called \emph{linear invariant
families\,}; that is, fa\-mi\-lies of locally univalent holomorphic functions $f$ in the unit disk normalized by the conditions $f(0)=1-f^\prime(0)=0$ and
which are closed under the  transformation
$$F_{\zeta}(z)=\frac{f\left(\displaystyle\frac{\zeta+z}{1+\overline{\zeta}z}\right)
-f(\zeta)}{(1-|\zeta|^2)f^\prime(\zeta)}\,,\quad \zeta\in\D.$$
Several important properties, such as growth, covering, and distortion are determined by the \emph{order} of a linear invariant family $\mathcal{F}$ defined by
$$\alpha(\mathcal F)=\sup_{f\in\mathcal F}a_2(f)=\frac 12 \sup_{f\in\mathcal F}|f''(0)|\,.$$
 For example, the order of the important class ${\mathcal S}$ of normalized univalent mappings in $\D$ is 2.
 We refer the reader to the books \cite{Dur-Univ} or \cite{P} for more details related to the class $\mathcal S$.
\par\smallskip
In  \cite{POM-I}, Pommerenke proves the following theorem regarding the linear invariant family $\mathcal{H}_{\lambda}$
of normalized locally univalent analytic functions $f$ in the unit disk with $\|Sf\|\leq \lambda$.

\begin{other}\label{thm-Pom}
The order of the family $\mathcal{H}_{\lambda}$ is given by
$$\alpha(\mathcal{H}_{\lambda})=\sqrt{1+\frac{\lambda}{2}}\, .$$
\end{other}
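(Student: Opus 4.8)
\medskip

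\noindent\emph{Proof proposal.} The plan is to prove the two inequalities $\alpha(\mathcal{H}_\lambda)\ge\sqrt{1+\lambda/2}$ and $\alpha(\mathcal{H}_\lambda)\le\sqrt{1+\lambda/2}$ separately: sharpness by exhibiting an explicit extremal family, and the upper bound by a variation of the Koebe transform together with the identity $Sf(0)=6\bigl(a_3(f)-a_2(f)^2\bigr)$.

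For the lower bound I would test the family with the functions
\[
f_p(z)=\frac{1}{2p}\left[\left(\frac{1+z}{1-z}\right)^{p}-1\right],\qquad p>0 .
\]
Each $f_p$ is holomorphic and locally univalent in $\D$ (its derivative is a nonvanishing power of $(1+z)/(1-z)$ times $1/(1-z^2)$) and is normalized by $f_p(0)=1-f_p'(0)=0$. A short computation gives $f_p''/f_p'=2(p+z)/(1-z^2)$, hence $Sf_p(z)=2(1-p^2)/(1-z^2)^2$; since $(1-|z|^2)\le|1-z^2|$ on $\D$ with equality on $(-1,1)$, this yields $\|Sf_p\|=2|1-p^2|$. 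Expanding $\bigl((1+z)/(1-z)\bigr)^p=1+2pz+2p^2z^2+\cdots$ shows $f_p(z)=z+pz^2+\cdots$, so $a_2(f_p)=p$. Choosing $p=\sqrt{1+\lambda/2}$ gives $\|Sf_p\|=2(p^2-1)=\lambda$ while $a_2(f_p)=\sqrt{1+\lambda/2}$, so $f_p\in\mathcal{H}_\lambda$ and the lower bound follows.

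For the upper bound I would first record that $\mathcal{H}_\lambda$ is indeed linear invariant: by the chain rule $S(f\circ\sigma)=(Sf\circ\sigma)\,(\sigma')^2$ for an automorphism $\sigma$ of $\D$, and the identity $(1-|z|^2)|\sigma'(z)|=1-|\sigma(z)|^2$ shows the Schwarzian norm is preserved, so the Koebe transforms $F_\zeta$ of a function in $\mathcal{H}_\lambda$ again lie in $\mathcal{H}_\lambda$. Since $\mathcal{H}_\lambda$ has finite order and is compact for locally uniform convergence (a consequence of the distortion estimates attached to a bound on $\|Sf\|$), there is an extremal $f_0\in\mathcal{H}_\lambda$, which after a rotation satisfies $a_2(f_0)=\alpha:=\alpha(\mathcal{H}_\lambda)$; by the lower bound $\alpha\ge1$. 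Writing $f_0(z)=z+\alpha z^2+a_3z^3+\cdots$, the transition formula $2a_2(F_\zeta)=(1-|\zeta|^2)\,f_0''(\zeta)/f_0'(\zeta)-2\bar\zeta$ together with $f_0''(\zeta)/f_0'(\zeta)=2\alpha+(6a_3-4\alpha^2)\zeta+\cdots$ gives
\[
a_2(F_\zeta)=\alpha+(3a_3-2\alpha^2)\,\zeta-\bar\zeta+O(|\zeta|^2).
\]
Because $|a_2(F_\zeta)|\le\alpha=a_2(F_0)$ for all $\zeta\in\D$, the real-valued function $\zeta\mapsto|a_2(F_\zeta)|^2$ has an interior maximum at $\zeta=0$, so the coefficient of $\zeta$ in its Taylor expansion must vanish; comparing with the display this forces the Marty-type relation $3a_3-2\alpha^2=1$, i.e.\ $a_3=(2\alpha^2+1)/3$. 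Finally $Sf_0(0)=6(a_3-\alpha^2)=2(1-\alpha^2)$, and evaluating the Schwarzian norm at the origin gives $2(\alpha^2-1)=|Sf_0(0)|\le\|Sf_0\|\le\lambda$, that is, $\alpha^2\le1+\lambda/2$.

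The delicate part is the variational step: one must justify the existence of the extremal $f_0$ (compactness of $\mathcal{H}_\lambda$ and finiteness of its order) and then run the Koebe-transform variation carefully enough — in particular checking that $\zeta=0$ is a genuine interior maximum of $|a_2(F_\zeta)|^2$ and computing its first-order term correctly — to extract $a_3=(2\alpha^2+1)/3$. Once that relation is available, substituting into $Sf_0(0)=6(a_3-\alpha^2)$ closes the upper bound, and the explicit functions $f_p$ take care of sharpness.
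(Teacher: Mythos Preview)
Your proposal is correct and follows precisely the route the paper attributes to Pommerenke and then adapts for its own main theorem. The paper does not give a self-contained proof of Theorem~A (it is quoted from \cite{POM-I}); it only records the extremal function $\varphi_a$ for the lower bound, which is exactly your $f_p$ with $p=\sqrt{1+\lambda/2}$, and it describes the upper bound as coming from ``a variational method that gives a relation for the second and third order coefficients of an extremal mapping.'' Your Marty-type relation $3a_3-2\alpha^2=1$ obtained from the Koebe variation, followed by $Sf_0(0)=6(a_3-\alpha^2)=2(1-\alpha^2)$, is the analytic special case ($b_2=0$) of the computation the paper carries out in Section~4.2 for $\mathcal{F}_\lambda^0$, so there is no methodological difference.
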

It is a straightforward calculation to show that given $\lambda\geq 0$, the function
\begin{equation}\label{eq-sharp-functions}
\varphi_a(z)=\frac{1}{2a}\left[\left(\frac{1+z}{1-z}\right)^a-1\right]\,, \quad |z| < 1\,,\quad a=\sqrt{\frac \lambda 2+1}\,,
\end{equation} belongs to $\mathcal{H}_{\lambda}$ and satisfies
\[
\frac 12 |\varphi_a''(0)| = \sqrt{1+\frac{\lambda}{2}}\,.
\]
\par\smallskip
Whenever $f\in \mathcal{S}$, its Schwarzian norm is bounded by $6$. Therefore, $\mathcal{S}\subset\mathcal{H}_6$ so that by Theorem~\ref{thm-Pom} we get $|a_2|\leq 2$ for all $f\in\mathcal{S}$, although the bound will hold also for the non-univalent mappings in $\mathcal{H}_6$.
\par\medskip

A planar \emph{harmonic mapping} in a domain $\Omega\subset\C$ is a complex-valued function
$w=f(z)=u(z)+iv(z)$, $z=x+iy$, which is harmonic, that is, $\Delta u=\Delta v=0$.
When $\Omega$ is simply connected, the mapping $f$ has a canonical
decomposition $f=h+\overline{g}$, where $h$ and $g$ are analytic in
$\Omega$. As is usual, we call $h$ the \emph{analytic part of $f$} and $g$ the \emph{co-analytic part of $f$}. The harmonic mapping $f$ is analytic if and only if $g$ is constant.
Lewy \cite{L} proved  that a harmonic mapping  is locally univalent in a domain $\Omega$
if and only if its Jacobian does not vanish. In terms of the canonical decomposition $f=h+\overline{g}$, the Jacobian is given by $|h'|^2-|g'|^2$, and thus, a locally univalent harmonic mapping in a
simply connected domain $\Omega$ will be \emph{sense-preserving} or \emph{sense-reversing} according to whether $|h'|>|g'|$ or
$|g'|>|h'|$ in $\Omega$. Note that $f=h+\overline g$ is sense-preserving in $\Omega$ if and only if $h^\prime$ does not vanish and the (second complex) \emph{dilatation} $\omega=g'/h'$ has the property that $|\omega|<1$ in $\Omega$.
We refer the reader to the book by Duren \cite{Dur-Harm} for an excellent exposition of
harmonic mappings.
\par
Let $\mathcal{F}$ be a family of sense-preserving harmonic mappings $f=h+\overline{g}$ in $\D$, normalized with $h(0)=g(0)=0$ and $h'(0)=1$. The family is said to be {\it affine and linear invariant} ($\mathcal {AL}$ {\it family}) if it
closed under the two operations of \emph{Koebe transform} and \emph{affine change}:
\begin{equation}\label{eq-koebetransform}
K_{\zeta}(f)(z)=\frac{f\left(\displaystyle\frac{z+\zeta}{1+\overline{\zeta} z}\right)-f(\zeta)}{(1-|\zeta|^2)h'(\zeta)}\,,\quad |\zeta|<1\,,
\end{equation}
and
\begin{equation}\label{eq-affinetransform}
A_\varepsilon(f)(z)=\frac{f(z)-\overline{\varepsilon f(z)}}{1-\overline\varepsilon g'(0)}\,,\quad |\varepsilon|<1\,.
\end{equation}
Sheil-Small \cite{S-S} offers an in depth study of affine and linear invariant families $\mathcal{F}$ of harmonic mappings in $\D$. The \emph{order} of the $\mathcal{AL}$ family, given by
$$ \alpha(\mathcal{F})=\sup_{f\in\mathcal F}|a_2(f)|=\frac 12 \sup_{f\in\mathcal F}|h''(0)| \, ,$$
plays once more a special role in the analysis.
\par
A  special example of affine and linear invariant family is the class $\mathcal{S}_H$ of (normalized) sense-preserving harmonic mappings which are univalent in the unit disk. As it is usual, we use $\mathcal{S}_H^0$ to denote the family of functions $f=h+\overline g\in \mathcal{S}_H$ with $g'(0)=0$. It is conjectured that the second Taylor coefficient of the analytic part $h$ of any function in $\mathcal{S}_H^0$ is bounded by $5/2$. If this conjecture were true, we would obtain that the order of $\mathcal{S}_H$ is equal to $3$. The analytic part of the so-called harmonic Koebe function $K\in \mathcal{S}_H^0$ (introduced by Clunie and Sheil-Small in \cite{CSS}) has second coefficient equal to $5/2$.
\par\smallskip
In \cite{HM2}, the authors introduce a definition for the Schwarzian derivative $S_f$ of locally univalent harmonic mappings, which serves as a complement to the definition found in \cite{CH-D-O}. The requirement of the latter that the dilatation be a square has been
replaced in the former by the local univalence. In both cases a chain rule is in order, which  shows that for mappings in $\D$ the norm $||S_f||$ defined as before is invariant under automorphisms of the disk.  But only
the Schwarzian $S_f$ introduced in \cite{HM2} is invariant under affine changes $af+b\overline{f}$, $|a|\neq |b|$. As a result,  the family $\mathcal{F}_{\lambda}$ of sense-preserving harmonic mappings
$f=h+\overline{g}$ in $\D$, with $h(0)=g(0)=0, h'(0)=1$ and $||S_f||\leq \lambda$, is affine and linear invariant. We let  $\mathcal F_\lambda ^0=\{f\in \mathcal F_\lambda\colon g^\prime(0)=0\}$.
\par\smallskip

The hyperbolic norm of the dilatation $\omega$ of a sense-preserving harmonic mapping in $\D$ is given by
\[
\|\omega^*\|=\sup_{z\in\D} \frac{|\omega'(z)|\cdot(1-|z|^2)}{1-|\omega(z)|^2}\, ,
\]
and will play a distinguished role in our analysis. Observe that $\|\omega^*\|\leq 1$ by Schwarz's lemma. As we will show, only for
$\lambda\geq 3/2$ will there exist a mapping in $f\in \mathcal F_\lambda$ with a dilatation of hyperbolic norm equal to 1. Moreover,
for $\lambda<3/2$ the supremum of all hyperbolic norms of mappings in $\mathcal F_\lambda$ will be strictly less than 1.

Let us denote by $\mathcal A_\lambda^0$ (resp. $\mathcal A_\lambda$) the set of \emph{admissible dilatations} of functions $f\in \mathcal F_\lambda ^0$ (resp. $\mathcal F_\lambda $); i.e., $\omega\in \mathcal A_\lambda^0$ (or $\mathcal A_\lambda$) if there exists a harmonic mapping $f=h+\overline g\in \mathcal F_\lambda ^0$ ($\mathcal F_\lambda$) with dilatation $\omega$. The main purpose of this article is to show the following generalization of  Theorem~\ref{thm-Pom}.
\begin{thm}\label{thm-main}
The order of $\mathcal F_\lambda$  is given by
\begin{eqnarray}\label{eq-thm-main}
\alpha\left( \mathcal F_\lambda\right) &=& \sqrt{\frac{\lambda}{2}+1+\frac 12\sup_{f\in\mathcal F_{\lambda}^0}|g''(0)|^2}+\frac 12 \sup_{f\in\mathcal F_{\lambda}^0}|g''(0)|
\\
&=&\label{eq-thm-main2} \sqrt{\frac{\lambda}{2}+1+\frac 12\sup_{\omega\in\mathcal A_{\lambda}}\|\omega^*\|^2}+\frac 12\sup_{\omega\in\mathcal A_{\lambda}}\|\omega^*\|
\,.
\end{eqnarray}
Furthermore,
\begin{eqnarray}\label{eq-bound-f0}
\label{eq-thm-main3}\frac 12 \sup_{f\in\mathcal F^0_\lambda}|h''(0)|&=& \sqrt{\frac\lambda2+1+\frac 12\sup_{f\in\mathcal F_{\lambda}^0}|g''(0)|^2}\\
&=&\label{eq-thm-main4}\sqrt{\frac\lambda2+1+\frac 12\sup_{\omega\in\mathcal A_{\lambda}^0}|\omega'(0)|^2}\,.
\end{eqnarray}
\end{thm}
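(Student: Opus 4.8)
The plan is to reduce everything to a variational computation à la Pommerenke--Marty, exploiting the affine and linear invariance of $\mathcal F_\lambda$ together with the chain rule for $S_f$. First I would recall that by definition $\alpha(\mathcal F_\lambda)$ is attained (or approached) by a sequence $f_n=h_n+\overline{g_n}$ with $|h_n''(0)|\to 2\alpha$; after a rotation we may assume $h_n''(0)\to 2\alpha>0$, and after an affine change $A_\varepsilon$ (which preserves $\mathcal F_\lambda$) we may try to normalize the co-analytic part. The key structural observation is that applying the affine transform to a mapping $f_0\in\mathcal F_\lambda^0$ (so $g_0'(0)=0$) produces a general mapping in $\mathcal F_\lambda$, and one computes directly from \eqref{eq-affinetransform} how $h''(0)$ and $g''(0)$ transform: writing $f=A_\varepsilon(f_0)$, the second coefficient $a_2(f)$ becomes a Möbius-type function of $\varepsilon$ involving $a_2(f_0)=\frac12 h_0''(0)$ and $b_2(f_0)=\frac12 g_0''(0)$. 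Maximizing over $|\varepsilon|<1$ for fixed $f_0$ gives, by an elementary optimization, a bound of the form $|a_2(f_0)| + |b_2(f_0)|\cdot(\text{something})$ — this is exactly where the extra term $\frac12\sup|g''(0)|$ in \eqref{eq-thm-main} enters, and it reduces \eqref{eq-thm-main} to \eqref{eq-thm-main3} together with the determination of $\sup_{\mathcal F_\lambda^0}|g''(0)|$.

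Next I would prove \eqref{eq-thm-main3}. Here I would use the Schwarzian $S_f$ of \cite{HM2} and its chain rule under the Koebe transform $K_\zeta$: the condition $\|S_f\|\le\lambda$ gives a pointwise inequality that, evaluated after applying $K_\zeta$ and differentiating, produces a relation among the Taylor coefficients of $h$ and $g$ at $0$ — the harmonic analogue of Pommerenke's relation between $a_2$ and $a_3$. Concretely, for $f\in\mathcal F_\lambda^0$ one should obtain an identity expressing $S_f(0)$ (and the relevant derivative) in terms of $h''(0)$, $h'''(0)$, $g''(0)$, $g'''(0)$; invariance under $K_\zeta$ upgrades this to a bound valid along the whole orbit, and the extremal choice balances $h''(0)$ against the Schwarzian budget $\lambda$ and the dilatation data $g''(0)=\omega'(0)$ (using $h'(0)=1$, $g'(0)=0$, so $b_2=\frac12 g''(0)$ and $\omega'(0)=g''(0)$, giving \eqref{eq-thm-main4}). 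The resulting optimization yields precisely $\alpha = \sqrt{\lambda/2 + 1 + \frac12|\omega'(0)|^2}$ when one pushes $|\omega'(0)|$ to its admissible maximum, so that \eqref{eq-thm-main4} follows from \eqref{eq-thm-main3} once one identifies $\sup|g''(0)|$ over $\mathcal F_\lambda^0$ with $\sup|\omega'(0)|$ over $\mathcal A_\lambda^0$ — but note \eqref{eq-thm-main2} pairs $\mathcal A_\lambda$ (not $\mathcal A_\lambda^0$) with $\|\omega^*\|$, so a separate argument is needed to see that $\sup_{\mathcal A_\lambda}\|\omega^*\| = \sup_{\mathcal F_\lambda^0}|g''(0)|$; this uses that $\|\omega^*\|$ is itself an automorphism-invariant quantity, so its supremum over an $\mathcal{AL}$ family is computed at the origin, i.e. equals $\sup|\omega'(0)|$, and that the affine change does not alter $\omega$.

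For the upper bounds I would argue as follows. Given any $f=h+\overline g\in\mathcal F_\lambda$, write $\omega=g'/h'$; the Koebe transform $K_\zeta$ sends $f$ to a mapping whose analytic part's second coefficient is the standard Pommerenke quantity $\tfrac12(1-|\zeta|^2)\big(h''(\zeta)/h'(\zeta)\big) - \overline\zeta$, and whose dilatation at $0$ is $(1-|\zeta|^2)\omega'(\zeta)/(1-|\omega(\zeta)|^2)$ after the natural re-normalization — so $\|\omega^*\|$ and the generalized second coefficient are controlled uniformly over the orbit. Feeding the Schwarzian bound $\|S_f\|\le\lambda$ into the coefficient identity at an arbitrary point and taking the supremum gives the claimed inequality $\alpha(\mathcal F_\lambda)^2 \le \lambda/2 + 1 + \tfrac12\|\omega^*\|^2 + (\text{cross term})$, the cross term being exactly what is absorbed by completing the square into $\big(\sqrt{\lambda/2+1+\tfrac12\|\omega^*\|^2}+\tfrac12\|\omega^*\|\big)$. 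The matching lower bound requires \emph{constructing} an extremal (or near-extremal) mapping — and this, as the Introduction warns, is the main obstacle: one must simultaneously saturate the Schwarzian norm and drive $\|\omega^*\|$ to its supremum, and these two demands interact nontrivially. I expect the construction to start from the holomorphic extremal $\varphi_a$ of \eqref{eq-sharp-functions} (with $a$ adjusted to account for the Schwarzian budget left over after the dilatation), take $h$ to be a suitable such power-type function and choose $\omega$ to be a Blaschke-type dilatation with $|\omega'(0)|$ as large as the constraint $\|S_f\|\le\lambda$ permits, then verify by the explicit formula for $S_f=S_h + (\text{dilatation terms})$ from \cite{HM2} that the norm condition holds with equality in the limit; checking that the dilatation terms and the Schwarzian of $h$ do not overshoot $\lambda$ when combined is the delicate point, and is presumably where the threshold $\lambda = 3/2$ for $\|\omega^*\|=1$ emerges.
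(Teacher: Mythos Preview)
Your overall strategy matches the paper's: reduce $\mathcal F_\lambda$ to $\mathcal F_\lambda^0$ via the affine change, use a Marty-type variational identity on an extremal in $\mathcal F_\lambda^0$ to get the upper bound \eqref{eq-thm-main3}, identify $\sup_{\mathcal F_\lambda^0}|g''(0)|$ with $\sup_{\mathcal A_\lambda}\|\omega^*\|$, and then construct an explicit extremal for the lower bound. Two inaccuracies in your write-up, however, should be fixed. First, the affine change $A_\varepsilon$ \emph{does} alter the dilatation: it replaces $\omega$ by $\sigma_{-\varepsilon}\circ\omega$ for an automorphism $\sigma_{-\varepsilon}$ of $\D$; what is true (and what you need) is that this post-composition leaves $\|\omega^*\|$ unchanged. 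Second, the upper bound is not obtained by ``completing the square'' on a single inequality with a cross term; the paper proceeds in two clean steps. One first takes an extremal $f_0\in\mathcal F_\lambda^0$ with $a_2$ real and maximal, and the variation under $K_\zeta\circ A$ yields the Marty relation $3a_3-2a_2^2-2|b_2|^2-1=0$; combined with $S_{f_0}(0)=6(a_3-a_2^2)$ this gives directly $|a_2|^2\le \lambda/2+1+2|b_2|^2$, which is \eqref{eq-thm-main3}. The passage to $\alpha(\mathcal F_\lambda)$ is then the separate, elementary estimate $|h''(0)|\le|H''(0)|+|\omega(0)|\,|G''(0)|$ coming from $h=H+\overline{\omega(0)}\,G$.

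The genuine gap in your proposal is the extremal construction, which in the paper is the bulk of the work. The dilatation is not a Blaschke product but the lens map $\ell_R$ (with $R=\sup_{\mathcal A_\lambda}\|\omega^*\|$), chosen because $|\ell_R^*(r)|\equiv R$ along the real diameter, and $h_0,g_0$ are determined by the \emph{shear} equations $h_0-g_0=\varphi_a$, $g_0'/h_0'=\ell_R$, with $a=\sqrt{\lambda/2+1+R^2/2}-R/2$. The hard part---which your sketch does not address---is verifying $\|S_{f_0}\|=\lambda$: the paper computes $S_{f_0}$ explicitly, shows $|S_{f_0}(z)|(1-|z|^2)^2$ is constant on the circular arcs $\{\arg\frac{1+z}{1-z}=\gamma\}$, reduces the supremum to the imaginary axis, and then performs a delicate case analysis in $\lambda$ (the case $\lambda<3/2$ requiring a monotonicity argument for an auxiliary function). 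Without this verification the lower bound, and hence the equalities in the theorem, are not established.
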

\par\smallskip
It was proved in \cite{HM2} that the Schwarzian norm of the harmonic Koebe function equals $19/2$. For $\lambda=19/2$ our result gives
\[
\frac 12 \sup_{f\in\mathcal F^0_\lambda}|h''(0)|=\frac 52 \, ,
\]
which would show that the order of $\mathcal{S}_H$ is equal to $3$ provided the harmonic Koebe function was extremal in the class for the
Schwarzian norm.

\par\smallskip


\section{Schwarzian derivative}

Let $f=h+\overline g$ be a locally univalent harmonic mapping in a simply connected domain $\Omega$ with dilatation $\omega=g^\prime/h^\prime$. In \cite{HM2}, the Schwarzian derivative $S_f$ of such a function $f$ was defined. If $f$ is sense-preserving, $S_f$ is given by
\begin{equation}\label{eq-Schwarzian}
S_f=Sh+\frac{\overline \omega}{1-|\omega|^2}\left(\frac{h''}{h'}\,\omega'-\omega''\right)
-\frac 32\left(\frac{\omega'\,\overline
\omega}{1-|\omega|^2}\right)^2\,.
\end{equation}

Several properties of this operator are the following:

\begin{enumerate}

\item[(i)] $S_f\equiv 0$ if and only if $f=\alpha T+\beta\overline{T}$, where $|\alpha|\neq|\beta|$ and $T$ is a M\"{o}bius transformation of the form
$$T(z)=\frac{az+b}{cz+d}\,,\quad ad-bc\neq 0\,.$$

\item[(ii)] Whenever $f$ is a sense-preserving harmonic mapping and $\phi$ is an analytic function such that the composition $f\circ\phi$ is well-defined, the Schwarzian derivative of $f\circ\phi$ can be computed using the \emph{chain rule}
    $$S_{f\circ\phi}=S_f(\phi)\cdot(\phi')^2+S\phi\,.$$

\item[(iii)] For any affine mapping $L(z)=az+b\overline z$ with $|a|\neq |b|$, we have that $S_{L\circ f}=S_f$. Note that $L$ is sense-preserving if and only if $|b|<|a|$.

\end{enumerate}

\par\smallskip
Consider now a sense-preserving harmonic mapping $f$ in the unit disk. Using the chain rule, we can see that for each $z\in\D$ $$|S_f(z)|= |S_{(f\circ \sigma_z)}(0)|\cdot(1-|z|^2)^2\,,$$ where $\sigma_z$ is any automorphism of the unit disk with $\sigma_z(0)=z$. The \emph{Schwarzian norm} $\|S_f\|$ of $f$ is defined by
\[
\|S_f\|=\sup_{z\in\D} |S_f(z)|\cdot(1-|z|^2)^2\,.
\]
\par
It is easy to check (using the chain rule again and the Schwarz-Pick lemma) that $\|S_{f\circ\, \sigma}\|=\|S_f\|$ for any automorphism of the unit disk $\sigma$. For further properties of $S_f$ and the motivation for this definition, see \cite{HM2}.


\section{Affine and Linear Invariant Families}

Let $\mathcal{S}_H$ denote the family of sense-preserving univalent harmonic mappings $f = h + \overline g$ on $\D$  normalized by $h(0) = 0$, $h'(0)=1$, and $g(0) = 0$. This family is affine and linear invariant. As usual, we use $\mathcal{S}_H^0$ to denote the subclass of functions in $\mathcal{S}_H$ that satisfy the further normalization $g'(0) = 0$. The family $\mathcal{S}_H$ is normal and $\mathcal{S}_H^0$ is compact (see \cite{CSS} or \cite{Dur-Harm}). Analogous results are obtained when dealing with the families $C_H$ and $C_H^0$ of convex harmonic mappings in $\mathcal{S}_H$ and $\mathcal{S}_H^0$, respectively.
\par\smallskip
Other examples of $\mathcal{AL}$ families of sense-preserving harmonic mappings are the stable harmonic univalent ($\mathcal{SHU}$) and the stable harmonic convex ($\mathcal{SHC}$) classes. A function $f=h+\overline g\in \mathcal{S}_H$ is {\it $\mathcal{SHU}$} (resp. {\it $\mathcal{SHC}$\,}) if $h+\lambda \overline g$ is univalent (convex) for every $|\lambda|=1$. These classes are linear invariant, and also affine because univalence or convexity are preserved under the
affine changes $A_\varepsilon$ as in \eqref{eq-affinetransform}. An important observation is that if the harmonic mapping $f$ has dilatation $\omega$, then  $F=A_{\omega(0)}(f)$ will have a dilatation vanishing at the origin.
\par
It is easy to check that $\alpha$($\mathcal{SHU}$)$=2$ and $\alpha$($\mathcal{SHC}$)$=1$ (see \cite{HM1}). In the next theorem, we obtain sharp bounds for the Schwarzian norm of functions in these classes.
\begin{thm}\label{cotas}
Let $f=h+\overline g$ be a locally univalent harmonic mapping defined in $\D$.
\begin{enumerate}
\item[(i)] If $f$ is a {\it $\mathcal{SHU}$} mapping, then $\|S_f\|\leq 6$.

\item[(ii)] If $f$ is a {\it {\it $\mathcal{SHC}$}} mapping, then $\|S_f\|\leq 2$.
\end{enumerate}
Both constants are sharp.
\end{thm}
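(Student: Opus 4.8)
The plan is to reduce both statements to estimates on $\|Sh\|$ together with the hyperbolic norm $\|\omega^*\|$ of the dilatation, and then invoke the known orders of the families. Recall from the Introduction that $\|\omega^*\|\leq 1$ always. The starting point is formula \eqref{eq-Schwarzian}: after post-composing with a disk automorphism and evaluating at the origin (as explained after \eqref{eq-Schwarzian}), it suffices to bound $|S_f(0)|$ over all Koebe transforms. Writing everything at a point $z$ and grouping terms, one should get a pointwise bound of the shape
\[
|S_f(z)|(1-|z|^2)^2 \;\leq\; \|Sh\| \;+\; \frac{|\omega(z)|}{1-|\omega(z)|^2}\,\Bigl|\tfrac{h''}{h'}\omega' - \omega''\Bigr|(1-|z|^2)^2 \;+\; \tfrac32\Bigl(\tfrac{|\omega'|(1-|z|^2)}{1-|\omega(z)|^2}\Bigr)^2|\omega(z)|^2 .
\]
The last term is $\leq \tfrac32\|\omega^*\|^2\leq\tfrac32$. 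The middle term is the delicate one; I would rewrite $\tfrac{h''}{h'}\omega'-\omega'' = -h'(\omega'/h')' \cdot$(something) or, more usefully, express it through the Schwarzian of the dilatation-adjusted map, so that Schwarz–Pick applied to $\omega$ controls $|\omega'|(1-|z|^2)/(1-|\omega|^2)$ and $\omega''$ in terms of $\omega$ and $\|\omega^*\|$.

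For part (i), the key input is that for an $\mathcal{SHU}$ mapping, $h+\lambda\overline g$ is univalent for all $|\lambda|=1$; in particular $h$ itself is univalent (take $\lambda\to 0$ by a normal-families/closedness argument, or note $h+\lambda\overline g$ univalent forces $h$ univalent directly), so $\|Sh\|\leq 6$. The remaining task is to show the dilatation contribution vanishes in a suitable limiting sense or is dominated so that the total is still $\leq 6$ — here one uses that $\mathcal{SHU}$ has order $2$, which by the Marty-type coefficient relation forces relations among $h''(0)$, $h'''(0)$ and $g''(0)$ that, combined with $\|\omega^*\|\le 1$, pin the Schwarzian at the extremal value exactly $6$. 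The cleanest route is probably: bound $\|S_f\|$ by $\|Sh\| + (\text{dilatation terms})$ and observe that for $\mathcal{SHU}$ the dilatation terms and the excess of $\|Sh\|$ below $6$ trade off, with equality achieved by an explicit map. For part (ii), convexity of $h+\lambda\overline g$ for all $|\lambda|=1$ gives $h$ convex, hence $\|Sh\|\leq 2$ (the classical bound for convex functions, attained by the half-plane map $z/(1-z)$), and the same argument with order $1$ in place of $2$ yields $\|S_f\|\leq 2$.

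Sharpness: for (ii) take $h(z)=z/(1-z)$ and $g\equiv 0$ (or a small affine perturbation within $\mathcal{SHC}$), giving $\|S_f\|=\|Sh\|=2$. For (i) take $h(z)=\tfrac12\log\frac{1+z}{1-z}$, i.e. the map with $\|Sh\|=6$, together with $g\equiv 0$; this lies in $\mathcal{SHU}$ and attains $\|S_f\|=6$. One must check these are genuinely in the respective classes (univalence/convexity of $h+\lambda\overline g$ for every unimodular $\lambda$), which for $g\equiv 0$ is immediate.

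The main obstacle I anticipate is controlling the middle term $\tfrac{\overline\omega}{1-|\omega|^2}\bigl(\tfrac{h''}{h'}\omega'-\omega''\bigr)$: a naive triangle-inequality bound will overshoot, so one needs to recognize that $\tfrac{h''}{h'}\omega'-\omega''$ is, up to the factor $1/\omega'$, essentially the Schwarzian-type quantity that already appears inside $S_f$, and exploit cancellation rather than estimate term by term. Equivalently, one should work with the affine-invariant normalization (replace $f$ by $A_{\omega(0)}(f)$ so that $\omega(0)=0$), use the chain rule (property (ii) after \eqref{eq-Schwarzian}) to move the supremum to the origin, and then the whole computation collapses to evaluating $\|Sh\|$ for the univalent (resp. convex) function $h$ plus a manifestly bounded correction — at which point the known sharp bounds $6$ and $2$ finish the proof.
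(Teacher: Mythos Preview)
Your final paragraph comes close to the paper's argument, but there is a genuine gap: you never identify the crucial algebraic fact that makes the whole thing work. Namely, if the dilatation satisfies $\omega(0)=0$, then every term in \eqref{eq-Schwarzian} except $Sh$ vanishes at the origin, so that $S_f(0)=Sh(0)$ \emph{exactly}. There is no ``manifestly bounded correction'' to estimate; the correction is zero. The paper's proof is a clean contradiction argument: suppose $|S_f(\zeta)|(1-|\zeta|^2)^2>6$ for some $\zeta$, apply first the Koebe transform $K_\zeta$ (to bring $\zeta$ to $0$) and \emph{then} the affine change $A_{\omega_\zeta(0)}$ (to make the new dilatation vanish at $0$). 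Both operations preserve the class and the Schwarzian norm, and now $|S_F(0)|=|SH(0)|>6$ for the univalent analytic part $H$ of the new map, contradicting $\|SH\|\leq 6$. Your phrasing reverses the order (affine first, then ``move the supremum to the origin''), which does not leave $\omega(0)=0$ after the Koebe step, and your triangle-inequality attempts and the detour through orders and Marty relations are unnecessary and would indeed overshoot.

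Your sharpness examples are also wrong. The function $h(z)=\tfrac12\log\frac{1+z}{1-z}$ has $Sh(z)=2/(1-z^2)^2$ and hence $\|Sh\|=2$, not $6$; it is the paper's extremal for part (ii), not (i). The map $h(z)=z/(1-z)$ is a M\"obius transformation, so $Sh\equiv 0$, not $2$. The correct examples (with $g\equiv 0$) are the Koebe function $k(z)=z/(1-z)^2$ for (i), which has $\|Sk\|=6$, and the strip map $s(z)=\tfrac12\log\frac{1+z}{1-z}$ for (ii), which has $\|Ss\|=2$.
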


\begin{proof}
It was shown in \cite{HM1} that if $f=h+\overline g\in$ $\mathcal{SHU}$, then $h+ag$ is univalent for all $|a|<1$; in particular, $h$ itself is univalent.
Assume that there exists $f\in\mathcal{SHU}$ with $\|S_f\|> 6$ and let $\omega$ be its dilatation. Then, there is a point $\zeta\in\D$ such that $|S_f(\zeta)|\cdot(1-|\zeta|^2)^2>6$.
Using the chain rule for the Schwarzian derivative and the affine invariance, we see that \begin{equation}\label{eq-prop}
|S_{A_{\omega_\zeta(0)}(K_\zeta(f))}(0)|=|S_f(\zeta)|\cdot(1-|\zeta|^2)^2>6\,,
\end{equation}
where $K_\zeta$ is the transformation defined by \eqref{eq-koebetransform}, $\omega_\zeta$ is the dilatation of the function $K_\zeta(f)$, and $A_{\omega_\zeta(0)}$ is as in  \eqref{eq-affinetransform}.
Since the dilatation of $K_\zeta(f)$ at the origin is $\omega_\zeta(0)$, we have that the dilatation of $A_{\omega_\zeta(0)}(K_\zeta(f))$ fixes the origin. Let $H$ denote the (univalent) analytic part of $A_{\omega_\zeta(0)}(K_\zeta(f))$; keeping in mind the definition \eqref{eq-Schwarzian} for the Schwarzian derivative, we see from \eqref{eq-prop} that
$$|S_{A_{\omega_\zeta(0)}(K_\zeta(f))}(0)|=|SH(0)|>6\, ,$$
which contradicts the univalence of $H$. This proves statement (i). The proof of (ii) follows the same argument, except for the fact that convex analytic mappings have Schwarzian norm bounded by 2 \cite{N2}.
\par
To prove that both constants are sharp, it is enough to consider the analytic functions
$$k(z)=\frac{z}{(1-z)^2}\quad\text{and}\quad s(z)=\frac 12\log\left(\frac{1+z}{1-z}\right)$$
that belong to the families of $\mathcal{SHU}$ and $\mathcal{SHC}$ mappings and have Schwarzian norms $\|S_k\|=6$ and $\|S_s\|=2$,  respectively.
\end{proof}
\par\smallskip

\par


\section{Admissible Dilatations }\label{sec-dilat}

In this section, we review some of the properties of hyperbolic derivatives of self-maps of the unit disk and determine the relation between hyperbolic norms of admissible dilatations in $\mathcal F_\lambda$ and the parameter $\lambda$ itself.

\subsection{The hyperbolic derivative}
Let $\omega$ be a self-map of the unit disk, this is, an analytic function in $\D$ with $\omega(\D)\subset\D$. The \emph{hyperbolic derivative} of such function $\omega$ is
\[
\omega^*(z)=\frac{\omega^\prime(z)\cdot (1-|z|^2)}{1-|\omega(z)|^2}\,,\quad z\in\D\,.
\]
\par
From Schwarz's lemma we see that $|\omega^*|\leq 1$ in $\D$, and that if there exists $z_0\in\D$ with $|\omega^*(z_0)|=1$, then $\omega$ is an automorphism of the unit disk and $|\omega^*|\equiv 1$ in $\D$. Of course, there are self-maps of $\D$ with hyperbolic norm equal to 1 which are not automorphisms. There are
examples such as $\omega(z)=(z+1)/2$, but also, every finite Blaschke product has hyperbolic norm equal to $1$ \cite{Heins}. See also \cite{M} for other examples.
\par\smallskip
Given two self-maps $\omega$ and $\varphi$ of the unit disk, the \emph{chain rule for the hyperbolic derivative} holds:
\[
\left(\varphi\circ \omega\right)^*(z)= \varphi^*(\omega(z))\cdot \omega^*(z)\,.
\]
In particular, if $\sigma$ is an automorphism of $\D$, then $|(\sigma\circ\omega)^*|\equiv |\omega^*|$ in the unit disk, hence $\|(\sigma\circ\omega)^*\|=\|\omega\|$.
\par\smallskip
Just like in \cite{HM3}, the so-called \emph{lens-maps} $\ell_\alpha$ will be of particular interest. For $0<\alpha<1$, the mapping $\ell_\alpha$ is defined by
\begin{equation}\label{eq-lensmap}
\ell_\alpha(z)=\frac{\ell(z)^\alpha-1}{\ell(z)^\alpha+1}\,,
\end{equation}
where $\ell(z)=(1+z)/(1-z)$. The hyperbolic norm of $\ell_\alpha$ was computed in \cite{HO} to be $\|\ell_\alpha^*\|=\a$. Moreover, $|\ell_\alpha^*(r)|=\alpha$ for all real numbers $0\leq r<1$.

\par\smallskip
\subsection{Admissible dilatations and norms}

Recall that we say that a self-map $\omega$ of the unit disk belongs to the family of admissible dilatations $\mathcal A_\lambda^0$ if there exists $f\in \mathcal F_\lambda^0$ with dilatation $\omega$. For any such $f$ and any $\a\in\D$, the affine transformation $f_\a=A_{-\a}(f)=f+\overline {\a f} \in\mathcal F_\lambda$. Its dilatation $\omega_\a$ is given by
\[
\omega_\a=\sigma_\a\circ\omega\,,
\]
where $\sigma_\a$ is the automorphism in the unit disk defined by
\begin{equation}\label{eq-autom}
\sigma_\a(z)=\frac{\a+z}{1+\overline \a z}\,,\quad z\in\D\,.
\end{equation}
In other words, whenever $\omega\in\mathcal A_\lambda^0$ and $\a\in\D$, then $\sigma_\a\circ\omega \in \mathcal A_\lambda$.
\par
On the other hand, if $f\in \mathcal F_\lambda$ that has dilatation $\omega$ satisfying $\omega(0)=\a$, then
\[
F=\frac{f-\overline{\a f}}{1-|\a|^2}\in \mathcal F_\lambda^0\,,
\]
with a resulting new dilatation $\omega_F=\sigma_\a^{-1}\circ \omega=\sigma_{-\a}\circ\omega$. These relations establish a correspondence between the families $\mathcal A_\lambda^0$ and $\mathcal A_\lambda$.

\par\smallskip
It is easy to verify that for any value of $\lambda$, there exists $\omega\in\mathcal A_\lambda$ with $\|\omega^*\|=0$ (just consider the identity function $I(z)=z$ in the unit disk which belongs to $\mathcal F_\lambda$ for all $\lambda\geq 0$). The following theorem characterizes the values of $\lambda$ for which
a dilatation in $\mathcal{A}_{\lambda}$ can have hyperbolic norm 1.

\begin{thm}\label{prop-dilat}
 The following conditions are equivalent.
\begin{itemize}
 \item[(i)] $\lambda\geq 3/2$.
 \item[(ii)] There exists $\omega \in\mathcal A_\lambda^0$ with $|\omega^\prime(0)|= 1$.
 \item[(iii)] The set $\{\lambda\cdot I\colon |\lambda|=1\}$ is contained in $\mathcal A_\lambda^0$. In particular, the identity function $I$ is an admissible dilatation in $\mathcal F_\lambda^0$.
 \item[(iv)] Every automorphism $\sigma$ of the unit disk is an admissible dilatation in $\mathcal F_\lambda$.
 \item[(v)] There exist $\omega \in\mathcal A_\lambda$ with $\|\omega^*\|= 1$.
\end{itemize}
\end{thm}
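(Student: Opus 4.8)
The plan is to prove the five statements equivalent through the cycle (i)$\,\Rightarrow\,$(iii)$\,\Rightarrow\,$(ii)$\,\Rightarrow\,$(i) together with (i)$\,\Rightarrow\,$(iv)$\,\Rightarrow\,$(v)$\,\Rightarrow\,$(ii), which pins (iv) and (v) between (i) and (ii). For (i)$\,\Rightarrow\,$(iii) I would produce an explicit extremal: given $|c|=1$, take $h(z)=z$ and $g(z)=\frac c2 z^2$, so that $f=h+\overline g$ is sense-preserving with dilatation $\omega(z)=cz$ and the required normalization; since $h''\equiv0$, formula~\eqref{eq-Schwarzian} collapses to $S_f(z)=-\tfrac32\,\overline z^2/(1-|z|^2)^2$, whence $\|S_f\|=\tfrac32\le\lambda$. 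Thus $cI\in\mathcal A_\lambda^0$ for every unimodular $c$, which is (iii), and taking $\omega=I$ gives (ii) immediately. For (i)$\,\Rightarrow\,$(iv): if $\sigma$ is an automorphism with $\sigma(0)=\a$, then $\sigma_{-\a}\circ\sigma$ fixes the origin and hence equals $cI$ for some $|c|=1$, so $\sigma=\sigma_\a\circ(cI)$; since $cI\in\mathcal A_\lambda^0$, the correspondence recorded in Section~\ref{sec-dilat} ($\sigma_\a\circ\omega\in\mathcal A_\lambda$ whenever $\omega\in\mathcal A_\lambda^0$) places $\sigma\in\mathcal A_\lambda$. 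Finally (iv)$\,\Rightarrow\,$(v) is immediate because every automorphism satisfies $|\sigma^*|\equiv1$, so $\|\sigma^*\|=1$.

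The heart of the argument is (ii)$\,\Rightarrow\,$(i). If $\omega\in\mathcal A_\lambda^0$ has $|\omega'(0)|=1$, then $\omega(0)=0$ and Schwarz's lemma force $\omega(z)=cz$ with $|c|=1$. Let $f=h+\overline g\in\mathcal F_\lambda^0$ be \emph{any} mapping with this dilatation. Substituting $\omega=cz$ into~\eqref{eq-Schwarzian} and using $|c|=1$ gives, with $\Phi(z):=S_f(z)(1-|z|^2)^2$,
\[
\Phi(z)=Sh(z)(1-|z|^2)^2+\overline z\,(1-|z|^2)\,\frac{h''(z)}{h'(z)}-\frac32\,\overline z^2 .
\]
I would then average over the circle $|z|=r$ against $e^{2i\theta}$: because $Sh$ and $h''/h'$ are holomorphic, their contributions to $\frac1{2\pi}\int_0^{2\pi}\Phi(re^{i\theta})e^{2i\theta}\,d\theta$ vanish (the only Fourier mode that could survive would lie in $e^{-2i\theta}$, resp.\ $e^{-i\theta}$, and neither is present), while the last term contributes exactly $-\tfrac32 r^2$. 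Hence $\max_\theta|\Phi(re^{i\theta})|\ge\tfrac32 r^2$ for every $r<1$, so $\|S_f\|=\sup_z|\Phi(z)|\ge\tfrac32$, and since $\|S_f\|\le\lambda$ we get $\lambda\ge\tfrac32$. I expect this averaging identity to be the one step that requires a genuine idea rather than bookkeeping with the invariances.

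It remains to close the loop with (v)$\,\Rightarrow\,$(ii). Suppose $f=h+\overline g\in\mathcal F_\lambda$ has dilatation $\omega$ with $\|\omega^*\|=1$, and choose $z_n\in\D$ with $|\omega^*(z_n)|\to1$. A short computation (tracking the unimodular factor the Koebe transform puts on $h'$) shows that $K_{z_n}(f)\in\mathcal F_\lambda$ has dilatation equal to $\omega\circ\sigma_{z_n}$ up to a unimodular constant, so by the chain rule for the hyperbolic derivative and $|\sigma_{z_n}^*|\equiv1$ its hyperbolic derivative at $0$ has modulus $|\omega^*(z_n)|$; following with the affine change that kills the dilatation at the origin yields $\widetilde f_n\in\mathcal F_\lambda^0$ whose dilatation $\widetilde\omega_n$ fixes $0$ and satisfies $|\widetilde\omega_n'(0)|=|\widetilde\omega_n^*(0)|\to1$. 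Passing to a locally uniform limit along a subsequence --- legitimate because $\mathcal F_\lambda$ has finite order and $\mathcal F_\lambda^0$ is therefore a compact family --- produces $\widetilde f\in\mathcal F_\lambda^0$ whose dilatation $\widetilde\omega$ has $\widetilde\omega(0)=0$ and $|\widetilde\omega'(0)|=1$, which is (ii). The only delicate point here is the normality of $\mathcal F_\lambda^0$, which I would invoke from the finiteness of $\alpha(\mathcal F_\lambda)$; alternatively, one can bypass the limiting argument by noting that when the supremum defining $\|\omega^*\|$ is attained in $\D$ the Schwarz--Pick lemma forces $\omega$ to be an automorphism (and then the correspondence gives $cI\in\mathcal A_\lambda^0$ directly), while the Koebe-transform construction handles the remaining case.
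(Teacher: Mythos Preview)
Your proof is correct, and the overall architecture matches the paper's: the same explicit mapping $z+\tfrac{c}{2}\overline z^{\,2}$ for (i)$\Rightarrow$(iii), the same affine correspondence for (iv), and the same Koebe-transform plus compactness argument for (v)$\Rightarrow$(ii).

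The substantive difference is your implication (ii)$\Rightarrow$(i). The paper does not argue this directly: instead it shows (iii)$\Rightarrow$(i) by proving that if $I\in\mathcal A_\lambda^0$ then $\alpha(\mathcal F_\lambda)\ge 2$ via a growth estimate for $|h'|$ (using affine perturbations $h-\overline\varepsilon g$ and the maximum modulus principle), and then appeals forward to the upper bound~\eqref{eq-ineq2} for $\alpha(\mathcal F_\lambda)$, established only later in Section~4.2. Your Fourier averaging argument---integrating $\Phi(re^{i\theta})e^{2i\theta}$ so that the holomorphic pieces $Sh$ and $h''/h'$ drop out and only the $-\tfrac32\overline z^{\,2}$ survives---is both shorter and entirely self-contained, avoiding the forward reference. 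What the paper's route buys in exchange is the auxiliary fact $\alpha(\mathcal F_\lambda)\ge 2$, which fits into the broader order calculation; your route gives $\|S_f\|\ge\tfrac32$ directly for \emph{every} $f$ with dilatation a rotation, which is arguably the cleaner statement.

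One small remark on (v)$\Rightarrow$(ii): you justify passing to a limit by saying ``$\mathcal F_\lambda$ has finite order and $\mathcal F_\lambda^0$ is therefore a compact family.'' Finite order is not yet established at this point; the paper instead argues compactness of $\mathcal F_\lambda^0$ directly from the stability of the bound $\|S_f\|\le\lambda$ under locally uniform limits (together with the normality argument from \cite[pp.~81--82]{Dur-Harm}). Your alternative, treating separately the case where the supremum is attained, is fine but does not remove the need for compactness in the non-attained case.
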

\begin{pf} The scheme of the proof is to show that (i) $\Rightarrow$ (ii) $\Rightarrow$ (iii) $\Rightarrow$ (i). Then we prove that (iii) $\Longleftrightarrow$ (iv),
and finally we see that (iv) $\Rightarrow$ (v) $\Rightarrow$ (ii).

\par\smallskip
To show that (i) $\Rightarrow$ (ii), we consider the function $f=z+\frac 12 \overline z^2$. Note that the dilatation $\omega$ of $f$ equals the identity function $I$ so that $\omega'(0)=1$. Since
\[
S_f(z)=- \frac{3\,\overline z^2}{2\,(1-|z|^2)^2}\,,
\]
we have $\|S_f\|= 3/2$. Thus, $f\in \mathcal{F}^0_\lambda$ for all $\lambda\geq 3/2$ and (ii) holds.
\par\smallskip
Let us now check that (ii) $\Rightarrow$ (iii). To do so, just note that given any harmonic function  $f=h+\overline g\in\mathcal{F}^0_\lambda$ with dilatation $\omega$ and any $\mu\in\partial\D$, the functions $f_{\mu}=h+\overline{\mu g}$ belong to $\mathcal F_\lambda^0$ as well since $S_f =S_{f_\mu}$ for all such $\mu$ and the dilatation $\omega_\mu$ of $f_\mu$ is equal to $\omega_\mu(z)=\mu\,\omega(z)$ (which satisfies $\omega_\mu(0)=0$).
\par
Now, if we assume that there exists $\omega \in\mathcal A_\lambda^0$ with $|\omega^\prime(0)|= 1$ we have, by the Schwarz lemma, that $\omega$ is a rotation of the disk (i.e. $\omega(z)=\lambda z$ for some $|\lambda|=1$). Therefore, using those functions $f_\mu$ we immediately get that (iii) holds.
\par\smallskip
Suppose that (iii) is satisfied. This is, there is a function  $f=h+\overline g \in \mathcal F_\lambda^0$ with dilatation $\omega=I$. We are going to prove that $\lambda \geq 3/2$. Indeed, we will obtain a stronger result in view of inequality \eqref{eq-ineq2} below. More specifically, we will see that (iii) implies that the order $\alpha(\mathcal F_\lambda)$ of the family $\mathcal F_\lambda$ is $2$ at least, so that by \eqref{eq-ineq2} we obtain (i) (note that the supremum that appears there equals $1$ whenever $I\in\mathcal A_\lambda^0$) .
\par
Using the affine invariance property of $\mathcal F_\lambda$, we have that whenever $f\in\mathcal F_\lambda^0$, the function $F_\varepsilon=A_\varepsilon(f)=f-\overline{\varepsilon f}\in \mathcal F_\lambda$ for all $\varepsilon \in \D$, where $A_\varepsilon$ is the transformation defined in \eqref{eq-affinetransform}. The analytic part in the canonical decomposition of $F_\varepsilon$ equals $h_\varepsilon=h-\overline\varepsilon g$ so that $h_\varepsilon^\prime(z)=h^\prime(z)\cdot(1-\overline\varepsilon I)$. Therefore, just applying the same arguments that appear in the proof of the theorem on \cite[p. 97]{Dur-Harm}, we get that for all $z\in\D$
\begin{equation}\label{eq-prop1}
|h^\prime(z)|\cdot|1-\varepsilon z|\geq \frac{(1-|z|)^{\alpha(\mathcal F_\lambda)-1}}{(1+|z|)^{\alpha(\mathcal F_\lambda) + 1}}
\end{equation}
whenever $\varepsilon<1$ and hence, for all $|\varepsilon|\leq 1$. Thus given any $z\neq 0$ in the unit disk, we can choose $\varepsilon=\overline z/|z|$ to get from \eqref{eq-prop1} that
\[
|h^\prime(z)|\geq \frac{(1-|z|)^{\alpha(\mathcal F_\lambda)-2}}{(1+|z|)^{\alpha(\mathcal F_\lambda) + 1}}\,,
\]
an inequality that obviously works for $z=0$. As a consequence, since $h$ is locally univalent in $\D$, we see that the analytic function $1/h^\prime$ in the unit disk satisfies
\[
\frac{1}{|h^\prime(z)|}\leq \frac{(1+|z|)^{\alpha(\mathcal F_\lambda)+1}}{(1-|z|)^{\alpha(\mathcal F_\lambda) -2}}\,,
\]
which implies, by the maximum modulus principle, that $\alpha(\mathcal F_\lambda)\geq 2$ (otherwise we would get $1/|h^\prime(0)|<1$, which is absurd since $h'(0)=1$). This shows that (iii) $\Rightarrow$ (i).
\par\smallskip
We continue with the proof of the theorem by showing that (iii) $\Leftrightarrow$ (iv). Note that since every rotation is an automorphism of the unit disk that fixes the origin, only the implication (iii) $\Rightarrow$ (iv) is needed to check the equivalence between these two statements. Let us assume then that the set $\{\lambda I\colon |\lambda|=1\}\subset \mathcal A_\lambda^0$. Then, we can use that $\mathcal F_\lambda$ is affine invariant and the functions $f_\mu$ defined above to see that for any $\a$ in the unit disk and any $\lambda, \mu\in \partial\D$, the functions
\[
\mu\sigma_\a\circ(\lambda I)=\mu\cdot \frac{\a+\lambda z}{1+\overline \a \lambda z}= \mu\lambda \cdot \frac{\overline \lambda \a+z}{1+\overline {\overline\lambda \a} z}\in \mathcal A_\lambda\,.
\]
In other words, for any $|\eta|=1$ and any $\beta\in\D$, the mappings
\begin{equation}\label{eq-autom-generalform}
\eta\cdot \frac{\beta+z}{1+\overline\beta z} \in \mathcal A_\lambda.
\end{equation}
The Schwarz lemma says that any automorphism of the unit disk has the form \eqref{eq-autom-generalform} so that (iv) holds.
\par\smallskip
Finally, we are show the equivalence between statements (iv) and (v). Once more, only one of the implications in the equivalence is non-trivial (recall that any automorphism of the unit disk has hyperbolic norm equal to one). Concretely, we just need to see that (v)  $\Rightarrow$ (iv). Since at this point we have proved that (i), (ii), (iii), and (iv) are equivalent, it suffices to check that (v) $\Rightarrow$ (ii). To do so, let $\omega\in \mathcal A_\lambda$ have the property that $\|\omega^*\|=1$. Then, by the definition of the hyperbolic norm, there exists a sequence of points in the unit disk $\{z_n\}$, say, such that
\[
\lim_{n\to\infty} |\omega^*(z_n)|=1.
\]
The fact that $\omega\in \mathcal A_\lambda$ means that there is a function $f=h+\overline g\in\mathcal F_\lambda$ with dilatation $\omega$. Using the transformations $K_{z_n}$ and $A_{\omega_n(0)}$ defined by \eqref{eq-koebetransform} and \eqref{eq-affinetransform}, respectively, where $\omega_n$ is the dilatation of $K_{z_n}(f)$, we obtain a sequence of harmonic mappings
\[
f_n=A_{\omega_n(0)}(K_{z_n}(f))
\]
with dilatations
\[
\gamma_n=\sigma_{-\omega_n(0)}\circ\omega_n=  \sigma_{-\omega_n(0)}\circ[\lambda_n\cdot(\omega\circ\sigma_{z_n})]\,,
\]
where $\lambda_n=h'(z_n)/\overline{h'(z_n)}$ and $\sigma_\a$ is again the automorphism of $\D$ defined by \eqref{eq-autom}.
\par
Note that $\gamma_n(0)=0$, so that each element in the sequence $f_n\in \mathcal F_\lambda^0$. Moreover, a straightforward computation shows that we also have
$|\gamma_n^\prime(0)|=|\omega^*(z_n)|$.
\par
Now, we argue as on \cite[pp. 81-82]{Dur-Harm} but in this case, instead of the argument principle, we use the fact that whenever $f_n\to f_0$ uniformly on compact subsets of $\D$ we have that
for each $z$ in the unit disk
\[
S_{f_n}(z)\to S_{f_0}(z)\,.
\]
This shows that $\mathcal{F}_\lambda^0$ is a normal and compact family. Thus, there exists a subsequence (that we rename $\{f_n\}$ again) of the sequence $\{f_n\}$ that converges to $f_0\in\mathcal F_\lambda^0$ uniformly on compact subsets in the unit disk. The dilatation $\gamma_0$ of the limit function $f_0$ is
\[
\gamma_0(z)=\lim_{n\to\infty} \gamma_n(z)\,,
\]
so that it satisfies
\[
|\gamma^\prime_0(0)|=\left|\lim_{n\to\infty} \gamma_n'(0)\right|=\lim_{n\to\infty} \left|\omega^*(z_n)\right|=1\,.
\]
This proves that (v) $\Rightarrow$ (ii) and finishes the proof of the theorem.
\end{pf}
\par\smallskip

From Theorems 1 and 3 we derive the following important corollary.
\begin{cor}\label{cor-lambda-grande}
If $\lambda\geq 3/2$ then
$$
\alpha\left( \mathcal F_\lambda\right) = \sqrt{\frac{\lambda}{2}+\frac 32}+\frac 12
$$
and
$$
\frac 12 \sup_{f\in\mathcal F^0_\lambda}|h''(0)|= \sqrt{\frac\lambda2+\frac 32}\,.
$$
\end{cor}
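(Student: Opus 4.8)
The plan is to read the corollary off from Theorem~\ref{thm-main}, once we know that for $\lambda\geq 3/2$ the two suprema
\[
\sup_{\omega\in\mathcal A_\lambda}\|\omega^*\|\qquad\text{and}\qquad \sup_{\omega\in\mathcal A_\lambda^0}|\omega'(0)|
\]
are both equal to $1$. So the proof reduces to pinning down these two quantities and then substituting.

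First I would record the trivial upper bounds, valid for all $\lambda\geq 0$. By the Schwarz--Pick lemma $\|\omega^*\|\leq 1$ for every self-map $\omega$ of $\D$; and if moreover $\omega(0)=0$, then $\omega^*(0)=\omega'(0)$, so $|\omega'(0)|\leq\|\omega^*\|\leq 1$. Hence $\sup_{\omega\in\mathcal A_\lambda}\|\omega^*\|\leq 1$ and $\sup_{\omega\in\mathcal A_\lambda^0}|\omega'(0)|\leq 1$.

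Next I would invoke Theorem~\ref{prop-dilat} to supply the matching lower bounds in the regime $\lambda\geq 3/2$. Condition~(ii) of that theorem produces some $\omega\in\mathcal A_\lambda^0$ with $|\omega'(0)|=1$, so $\sup_{\omega\in\mathcal A_\lambda^0}|\omega'(0)|=1$; condition~(v) produces some $\omega\in\mathcal A_\lambda$ with $\|\omega^*\|=1$, so $\sup_{\omega\in\mathcal A_\lambda}\|\omega^*\|=1$. (Both are witnessed concretely by $f=z+\tfrac12\overline z^2$, exactly as in the proof of Theorem~\ref{prop-dilat}.)

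Finally, substitute these values into the identities \eqref{eq-thm-main2} and \eqref{eq-thm-main4} of Theorem~\ref{thm-main}. This gives
\[
\alpha(\mathcal F_\lambda)=\sqrt{\frac\lambda2+1+\frac12}+\frac12=\sqrt{\frac\lambda2+\frac32}+\frac12
\]
and
\[
\frac12\sup_{f\in\mathcal F^0_\lambda}|h''(0)|=\sqrt{\frac\lambda2+1+\frac12}=\sqrt{\frac\lambda2+\frac32}\,.
\]
There is no genuine obstacle here; the one point that wants a little care is to keep the two suprema straight — the order formula involves the supremum over $\mathcal A_\lambda$ while the bound for $|h''(0)|$ involves the supremum over $\mathcal A_\lambda^0$ — which is precisely how \eqref{eq-thm-main2} and \eqref{eq-thm-main4} are written, and which is in any case harmless since both suprema equal $1$ when $\lambda\geq 3/2$.
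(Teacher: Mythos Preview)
Your argument is correct and is exactly the approach the paper takes: the corollary is stated immediately after Theorem~\ref{prop-dilat} with the sentence ``From Theorems~1 and~3 we derive the following important corollary,'' and the derivation is precisely the substitution you carry out. Your added remark that Schwarz--Pick caps both suprema at $1$, so that Theorem~\ref{prop-dilat} turns them into equalities, makes the one-line deduction fully explicit.
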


Even though for $\lambda <3/2$ we have been unable to determine the value of $\sup_{\omega\in\mathcal A_\lambda} \|\omega^*\|$, one can
rewrite Theorem 1 in the form
\begin{equation}\label{dilatacion chica}
\sup_{\omega\in\mathcal A_\lambda} \|\omega^*\|=-2\alpha(\mathcal F_\lambda)+ \sqrt{8\alpha(\mathcal F_\lambda)^2-2\lambda-4}\,.
\end{equation}

\section{Proof of the Main Theorem}
We will divide the proof of Theorem 1 in three different parts.

\subsection{A lemma}
We begin by proving that \eqref{eq-thm-main2} and \eqref{eq-thm-main4} hold.
\begin{lem}\label{lem-equal}
For any positive real number $\lambda$,
\begin{itemize}
\item[(i)]
\[
\sup_{f=h+\overline g \in\mathcal F_\lambda^0} |g''(0)|= \sup_{\omega \in\mathcal A_\lambda^0} |\omega'(0)|\,.
\]
\item[(ii)]
\[
\sup_{\omega \in\mathcal A_\lambda^0} |\omega'(0)|= \sup_{\omega \in\mathcal A_\lambda^0} \|\omega^*\|=\sup_{\omega \in\mathcal A_\lambda} \|\omega^*\|\,.
\]
\end{itemize}
\end{lem}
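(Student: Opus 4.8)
The plan is to dispatch part (i) as an algebraic identity between the second Taylor coefficient of $g$ and the derivative at the origin of the dilatation, and part (ii) as a short cycle of three inequalities whose only nontrivial link recycles the Koebe-plus-affine normalization already carried out in the proof of Theorem~\ref{prop-dilat}.

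For (i), I would start from the relation $\omega = g'/h'$ defining the dilatation of $f = h + \overline g \in \mathcal F_\lambda^0$. Since $f$ is sense-preserving, $h'$ is zero-free, so $\omega$ is holomorphic in $\D$; the normalizations $h'(0) = 1$ and $g'(0) = 0$ give $\omega(0) = 0$, whence $\omega \in \mathcal A_\lambda^0$. Differentiating $g' = \omega\, h'$ and evaluating at the origin gives $g''(0) = \omega'(0)h'(0) + \omega(0)h''(0) = \omega'(0)$, so $|g''(0)| = |\omega'(0)|$ for every such $f$. Since $\mathcal A_\lambda^0$ is by definition exactly the set of dilatations of functions in $\mathcal F_\lambda^0$, the set of values $\{\,|g''(0)| : f \in \mathcal F_\lambda^0\,\}$ equals $\{\,|\omega'(0)| : \omega \in \mathcal A_\lambda^0\,\}$, and hence the two suprema agree.

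For (ii), I would prove the chain
\[
\sup_{\omega\in\mathcal A_\lambda^0}|\omega'(0)|\ \le\ \sup_{\omega\in\mathcal A_\lambda^0}\|\omega^*\|\ \le\ \sup_{\omega\in\mathcal A_\lambda}\|\omega^*\|\ \le\ \sup_{\omega\in\mathcal A_\lambda^0}|\omega'(0)|,
\]
which forces equality throughout. The first inequality holds because $\omega(0) = 0$ for $\omega \in \mathcal A_\lambda^0$ gives $\omega^*(0) = \omega'(0)$, so $|\omega'(0)| = |\omega^*(0)| \le \|\omega^*\|$; the second is immediate from $\mathcal A_\lambda^0 \subseteq \mathcal A_\lambda$, which follows from $\mathcal F_\lambda^0 \subseteq \mathcal F_\lambda$. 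For the third --- the only substantial step --- I would fix $\omega \in \mathcal A_\lambda$, the dilatation of some $f = h + \overline g \in \mathcal F_\lambda$, and a point $z_0 \in \D$, and then apply the Koebe transform $K_{z_0}$ followed by the affine normalization $A_{\omega_{z_0}(0)}$, where $\omega_{z_0}$ denotes the dilatation of $K_{z_0}(f)$. This produces $f_0 \in \mathcal F_\lambda^0$ whose dilatation, exactly as computed in the proof of Theorem~\ref{prop-dilat}, is $\gamma = \sigma_{-\omega_{z_0}(0)}\circ\omega_{z_0}$ with $\omega_{z_0} = \lambda_{z_0}\,(\omega\circ\sigma_{z_0})$, $|\lambda_{z_0}| = 1$. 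Using $\sigma_{z_0}'(0) = 1 - |z_0|^2$ and $(\sigma_{-b})'(b) = (1 - |b|^2)^{-1}$ with $b = \omega_{z_0}(0)$ (so $|b| = |\omega(z_0)|$), the chain rule yields
\[
|\gamma'(0)| = \frac{|\omega'(z_0)|\,(1 - |z_0|^2)}{1 - |\omega(z_0)|^2} = |\omega^*(z_0)|.
\]
Since $\gamma \in \mathcal A_\lambda^0$, this gives $|\omega^*(z_0)| \le \sup_{\omega\in\mathcal A_\lambda^0}|\omega'(0)|$; taking the supremum first over $z_0 \in \D$ and then over $\omega \in \mathcal A_\lambda$ completes the cycle. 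Combined with part (i), this also supplies the equalities needed to pass between the displayed formulas in Theorem~\ref{thm-main}.

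The only point calling for care is the bookkeeping in that last step: that under $K_{z_0}$ the dilatation acquires the unimodular twist $\lambda_{z_0} = h'(z_0)/\overline{h'(z_0)}$ and is precomposed with $\sigma_{z_0}$, that the affine normalization $A_{\omega_{z_0}(0)}$ postcomposes it with the disk automorphism $\sigma_{-\omega_{z_0}(0)}$, and that the resulting derivative at $0$ comes out to exactly $|\omega^*(z_0)|$. All of these formulas are already in place in the proof of Theorem~\ref{prop-dilat}, so I do not anticipate a genuine obstacle --- the remaining work is to quote them and order the inequalities correctly.
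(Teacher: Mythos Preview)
Your proposal is correct and uses essentially the same ingredients as the paper: part (i) is the identical one-line computation, and part (ii) hinges on the same Koebe-plus-affine renormalization showing that $|\gamma'(0)|=|\omega^*(z_0)|$ for the resulting dilatation $\gamma\in\mathcal A_\lambda^0$. The only difference is organizational: the paper proves the first equality in (ii) by starting from $f\in\mathcal F_\lambda^0$ and then deduces the second equality separately from the fact that the $\mathcal A_\lambda\leftrightarrow\mathcal A_\lambda^0$ correspondence is post-composition with a disk automorphism (which preserves $\|\omega^*\|$), whereas you run a single cycle of three inequalities starting from $f\in\mathcal F_\lambda$ and close it with one application of the renormalization trick. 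Both arguments are equally short and rely on the same computation already carried out in the proof of Theorem~\ref{prop-dilat}.
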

\begin{pf}
The proof of (i) is trivial: just recall that the dilatation $\omega$ of $f=h+\overline g \in\mathcal F_\lambda^0$  equals $\omega=g'/h'$ and satisfies $\omega(0)=0$. Thus, $g'=\omega h'$ and hence $g''(0)=\omega'(0) h'(0)+\omega(0)h''(0)=\omega'(0)$, since $h'(0)=1$.
\par
To prove (ii), take an arbitrary $f=h+\overline g \in\mathcal F_\lambda^0$ with dilatation $\omega$. The transformations \eqref{eq-koebetransform} and \eqref{eq-affinetransform} produce new functions that also belong to this family. Concretely, given any such function $f$, we define the mappings
\[
f_\z=A_{\omega_\zeta(0)}(K_{\z}(f))=h_\z+\overline{g_\z}\,,\quad \z\in\D\,,
\]
where $\omega_\zeta$ is the dilatation of $K_\zeta(f)$. These mappings $f_\z$ belong to $\mathcal F_\lambda^0$ and have dilatations
\[
\gamma_\z= \sigma_{-\omega_\zeta(0)}\circ\omega_\zeta\in \mathcal A_\lambda^0\,,
\]
where $\omega_\zeta=\lambda_\zeta\cdot(\omega\circ\sigma_\z)$ for certain $|\lambda_\zeta|=1$ and appropriate automorphisms $\sigma_\a$ of the form \eqref{eq-autom}. Since
\[
|\gamma_\z^\prime(0)|=|\omega^*(\z)|
\]
and $\z$ is any arbitrary point in $\D$, we conclude that the first equality in (ii) holds. The second inequality is an easy consequence of the fact that the correspondence between the families $\mathcal A_\lambda$ and $\mathcal A_\lambda^0$ is realized using pre-composition with automorphisms of the unit disk (an operation that preserves the hyperbolic norm).
\end{pf}

\subsection{Upper bounds}
The aim of this section is to prove the following inequalities:
\begin{equation}\label{eq-ineq1}
\frac 12 \sup_{f\in\mathcal F^0_\lambda}|h''(0)|
\leq \sqrt{\frac\lambda2+1+\frac 12\sup_{\omega\in\mathcal A_{\lambda}^0}|\omega'(0)|^2}
\end{equation}
and
\begin{equation}\label{eq-ineq2}
\alpha\left( \mathcal F_\lambda\right) \leq \sqrt{\frac{\lambda}{2}+1+\frac 12\sup_{\omega\in\mathcal A_{\lambda}}\|\omega^*\|^2}+\frac 12\sup_{\omega\in\mathcal A_{\lambda}}\|\omega^*\|\,.
\end{equation}

To prove \eqref{eq-ineq1}, let us agree with the notation
\[
S=\frac 12 \sup_{f\in\mathcal{F}^0_\lambda}|h''(0)|\,.
\]
Given $f\in\mathcal{F}^0_\lambda$, the function $F$ defined by $F(z)=\overline \lambda f(\lambda z) \in \mathcal{F}^0_\lambda$ as well. Therefore, we see that
\[
S=\frac 12 \sup_{f\in\mathcal{F}^0_\lambda}{\rm Re}\, \{h''(0)\}\,.
\]
As it was mentioned in the proof of Theorem~\ref{prop-dilat}, the family $\mathcal{F}_\lambda^0$ is normal and compact. Therefore, there exists a function $f_0=h_0+\overline{g_0}$ in $\mathcal{F}_\lambda^0$ with dilatation $\omega_0$, where $$h_0(z)=z+a_2z^2+a_3z^3+\ldots,\quad g_0(z)=b_2z^2+b_3z^3+\ldots\,,$$ such that
$${\rm Re}\,\{a_2\}=|a_2|=S\,.$$
\par
Take an arbitrary point $\zeta\in\D$ and consider, once more, the transformations \eqref{eq-koebetransform} and \eqref{eq-affinetransform} to produce the family
$$F_\zeta=A_{\omega_{\zeta}(0)}(K_\z(f))=h_\zeta^*+\overline{g_\zeta^*}$$
of functions that are in $\mathcal{F}_\lambda^0$.
\par\smallskip
As it is shown on \cite[p.102]{Dur-Harm}, the Taylor coefficients $a_n^\ast$ of $h_\zeta^*$ satisfy
\begin{equation*}
\label{eq-label a_n*} a_n^\ast=a_n+[(n+1)a_{n+1}-2a_2a_n]\zeta-[2\overline{b_2}b_n+(n-1)a_{n-1}]\overline{\zeta}+o(|\zeta|)\,.
\end{equation*}
Then, we get that
\begin{equation}\label{marty} 3a_3-2a_2^2-2|b_2|^2-1=0\,.
\end{equation}
\par
Now, using the equations $S_{f_0}(0)=Sh_0(0)=6(a_3-a_2^2)$, we have by (\ref{marty}) that ${\rm Re}\,\{S_{f_0}(0)\}=4|b_2^2|+2-2{\rm Re}\,\{a_2^2\}$. Hence, bearing in mind $S={\rm Re}\,\{a_2\}=|a_2|$, we obtain
\begin{eqnarray*}
S^2&=&|a_2|^2\leq \frac 12|S_{f_0}(0)|+1+2|b_2|^2\\
&\leq& \frac{\lambda}{2}+1+\frac 12 \sup_{\omega\in\mathcal{A}^0_\lambda}|\omega'(0)|^2.
\end{eqnarray*}
This proves \eqref{eq-ineq1}. To show that \eqref{eq-ineq2} holds we can argue as follows. Take any function $f=h+\overline g \in\mathcal F_\lambda$ with dilatation $\omega$ and consider the affine transformation $A_{\omega(0)}$ as in \eqref{eq-affinetransform} to get the function
$$F=A_{\omega(0)}(f)= H+\overline G\in \mathcal{F}_\lambda^0\,.$$
Note that $f=F+\overline{\omega(0) F}$. Thus, the analytic part of $f$ equals $H+\overline{\omega(0)}G$. Using also that $\omega(\D)\subset\D$, and Lemma~\ref{lem-equal}, we get
\begin{eqnarray*}
|h''(0)|&=&|H''(0)+\overline{\omega(0)}G''(0)|\leq 2S+|\omega(0)|\, \sup_{f\in\mathcal F_\lambda^0} |g''(0)|\\
&\leq& 2S+\, \sup_{\omega\in\mathcal A_\lambda} \|\omega^*\| \,,
\end{eqnarray*}
which gives \eqref{eq-ineq2}.

\subsection{Equalities}
In order finish the proof of Theorem 1, we will show that \eqref{eq-ineq1} and \eqref{eq-ineq2} are actual equalities.
\par
To see that equality holds in \eqref{eq-ineq1}, we will exhibit a function $f_0=h_0+\overline{g_0} \in \mathcal F_\lambda^0$ with dilatation $\omega_0$ satisfying the following properties:
$$
\omega_0^\prime(0)=\sup_{\omega\in\mathcal A_\lambda^0} \|\omega^*\|\quad\text{and}\quad h_0''(0)=2\cdot \sqrt{\frac\lambda 2+1+\omega_0^\prime(0)}\,.
$$
This will show  that \eqref{eq-thm-main3} holds. Furthermore, for any $0<r<1$, the mappings $f_r=f_0+r\overline{f_0}\in\mathcal F_\lambda$ and have analytic parts
$h_r=h_0+rg_0$, for which
\begin{eqnarray*}
h_r''(0)&=&h_0''(0)+r g_0''(0)= \sup_{f\in\mathcal F_\lambda^0} |h''(0)|+ r \omega_0'(0)\\ &=& \sup_{f\in\mathcal F_\lambda^0} |h''(0)|+ r \sup_{\omega\in\mathcal A_\lambda^0} \|\omega^*\| \\ &\to& \sup_{f\in\mathcal F_\lambda^0} |h''(0)|+ \sup_{\omega\in\mathcal A_\lambda^0} \|\omega^*\|\,,
\end{eqnarray*}
as $r\to 1^-$. This shows that equality also holds in \eqref{eq-ineq2}, proving thus \eqref{eq-thm-main}.
\par
The function $f_0$ will be constructed in the next, final section.


\section{The mapping $f_0$}

Given $\lambda\geq 0$, let us denote by $R_\l=\sup_{\omega\in\mathcal A_\lambda} \|\omega^*\|$. If $\lambda=0$, then by \cite[Cor. 2]{HM2} every function in $\mathcal F_\lambda$ has the form $h+a\overline h$, where $a\in\D$ and
\[
h(z)=\frac{z}{1+bz}\,,\quad |b|<1\,.
\]
Hence $R_\l=0$. We now analyze the cases $\lambda >0$.

\subsection{Harmonic mappings in $\mathcal F_\lambda^0$ when $\lambda>0$}
By Theorem~\ref{prop-dilat}, we have that $\lambda\geq 3/2$ if and only if $R_\lambda =1$. Equation (\ref{dilatacion chica}) gives the exact value of $R_\lambda$ in terms of $\lambda$ and $\alpha(\mathcal F_\lambda)$. The following example allows us to estimate the value of $R_\lambda$ in terms of $\lambda$ for $0<\lambda<3/2$.
\par\medskip

Given $0<\lambda < 3/2$, we can write $\lambda=3s^2/2$ for some $0<s<1$. Consider the functions $f_r=z+\frac12r\overline{z}^2$, where $r\in (0,1)$. The Schwarzian norm of  $f_r$ equals
\begin{equation}\label{eq-ex1}
\|S_{f_r}\|=\frac 32 \sup_{z\in\D}\frac{r^2|z|^2}{(1-|rz|^2)^2}\cdot(1-|z|^2)^2\leq \frac 32\, r^2\,.
\end{equation}
Hence, $f_r\in \mathcal F_\lambda^0$ whenever $r\leq s$. This implies that $R_\l\geq r=||\omega_r^*||$, where $\omega_r$ is the dilatation of $f_r$. In particular, this
implies that if
$\lambda \in (0, 3/2)$,
\begin{equation}\label{eq-upperR}
R_\l>\sqrt{\frac{2\lambda}{3}}\,.
\end{equation}
Note that we have strict inequality in \eqref{eq-upperR} since the supremum that appears in \eqref{eq-ex1} is strictly less than $r^2$. Also, that $R_\l>0$ if $\l>0$.

\par\medskip

\subsection{The extremal}
Let us introduce the  notation $\alpha=\alpha(\mathcal F_\lambda)$ and $R=R_\lambda=\sup\{\|\omega^*\|\colon \omega\in \mathcal A_\lambda\}$.
For $\lambda>0$ we consider the mapping $f_0=h_0+\overline{g_0}$, where $h_0$ and $g_0$ solve the linear system of equations
\begin{equation}\label{eq-function}
\left\{
\begin{array}{c}
h_0-g_0=\varphi_a\\
\omega_0=g_0^\prime/h_0^\prime=\ell_R
\end{array}\right.\,,\quad\quad h_0(0)=g_0(0)=0\,,
\end{equation}
where
\[
a=\sqrt{\frac\lambda 2+1+\frac12 R^2}-\frac R2\, ,
\]
and the function $\varphi_a$ is the generalized Koebe function defined by \eqref{eq-sharp-functions}. Also, $\ell_1$ is the identity mapping $I$ and for $0<R<1$, $\ell_R$ is the lens-map \eqref{eq-lensmap}. Note that $h_0'(0)=1-g_0'(0)=0$ and that $f_0$ is a locally univalent mapping in the unit disk since the dilatation is a self-map of $\D$ and $h_0$ is locally univalent.

\par\smallskip
By \eqref{eq-function}, we have $h_0^\prime(1-\ell_R)=\varphi_a^\prime$ which implies
\[
\frac{h_0''}{h_0^\prime}=\frac{\varphi_a''}{\varphi_a^\prime}+\frac{\ell_R'}{1-\ell_R}\, ,
\]
so that
\[
Sh_0 = S\varphi_a +\frac{\ell_R''}{1-\ell_R}+\frac 12\frac{(\ell_R')^2}{(1-\ell_R)^2} - \frac{\ell_R'}{1-\ell_R}\cdot\frac{\varphi_a''}{\varphi_a^\prime}\,,
\]
which gives
\[
Sh_0 (0)=2(1-a^2) + \frac{R^2}{2} - 2a R = 2+\frac{R^2}{2} -2a R-2 a^2\,.
\]
Define the function
\[
\psi(x)=2+\frac{R^2}{2} -2x R-2 x^2\,.
\]
Note that
\[
\psi\left(\sqrt{1+\frac{R^2}{2}}-\frac R2\right)=0\,.
\]
Also, that $\psi^\prime <0$ which implies, since
\[
a> \sqrt{1+\frac{R^2}{2}}-\frac R2\,,
\]
that $\psi(a)<0$. This means that $|Sh_0 (0)|=2 a^2 + 2a R -2-R^2/2$, which is easily seen to be equal to $\lambda$. Since $\omega_0(0)=0$, we also have that $|Sf_0 (0)|=\lambda$. To show that $f_0\in\mathcal F_\lambda^0$, we just need to check $\lambda=|S_{f_0}(0)|=\|S_{f_0}\|$. To do so, we compute the Schwarzian derivative of $f_0$ which, according to \eqref{eq-Schwarzian}, equals
\begin{eqnarray}
\nonumber S_{f_0}&=& Sh_0+\frac{\overline{\ell_R}}{1-|\ell_R|^2}\left( \frac{h_0''}{h_0^\prime}\ell_R'-\ell_R''\right)-\frac 32\left(\frac{\overline{\ell_R} \ell_R'}{1-|\ell_R|^2}\right)^2\\
&=& \label{eq-form1} S\varphi_a +\frac{\ell_R''}{1-\ell_R}+\frac 12\frac{(\ell_R')^2}{(1-\ell_R)^2} - \frac{\ell_R'}{1-\ell_R}\cdot\frac{\varphi_a''}{\varphi_a^\prime}
\\
&+& \nonumber \frac{\overline{\ell_R}}{1-|\ell_R|^2}\left( \frac{\varphi_a''}{\varphi_a^\prime}\ell_R'+\frac{(\ell_R')^2}{1-\ell_R}-\ell_R''\right)-\frac 32\left(\frac{\overline{\ell_R} \ell_R'}{1-|\ell_R|^2}\right)^2\,.
\end{eqnarray}
Since for any complex number $z$,
\[
\frac{1}{1-z}-\frac{\overline z}{1-|z|^2}=\frac{1-\overline z}{(1-z)(1-|z|^2)}\,,
\]
we get from \eqref{eq-form1}
\begin{eqnarray}
S_{f_0}&=& \nonumber S\varphi_a+\frac{1-\overline{\ell_R}}{1-\ell_R}\cdot \frac{\ell_R''}{1-|\ell_R|^2}-\frac{1-\overline{\ell_R}}{1-\ell_R}\cdot\frac{\ell_R'}{1-|\ell_R|^2}\cdot  \frac{\varphi_a''}{\varphi_a^\prime}\\
&+&\nonumber \frac {(\ell_R')^2}{2}\cdot\left[\left(\frac{1+\overline{\ell_R}-2|\ell_R|^2}{(1-\ell_R)(1-|\ell_R|^2)}\right)^2-4\cdot \left(\frac{\overline{\ell_R}}{1-|\ell_R|^2}\right)^2\right]\\
&=&\label{eq-form2} S\varphi_a+F_R\cdot \left(\frac{\ell''_R}{\ell'_R}-\frac{\varphi_a''}{\varphi_a^\prime}\right)+\frac{F_R^2}{2}\cdot\frac{1+3\overline{\ell_R}-4|\ell_R|^2}{1-\overline{\ell_R}}\,,
\end{eqnarray}
where
\[
F_R=\frac{1-\overline{\ell_R}}{1-\ell_R}\cdot\frac{\ell_R'}{1-|\ell_R|^2}\,.
\]
Let us write
\begin{equation}\label{eq-wybeta}
w=\left(\frac{1+z}{1-z}\right)^R\quad \text{and}\quad \beta=\frac{w}{Re w}\,.
\end{equation}
Then, $F_R(z)=R\beta/(1-z^2)$,
\[
\frac{\ell''_R(z)}{\ell'_R(z)}-\frac{\varphi_a''(z)}{\varphi_a^\prime(z)}=\frac{-2a}{1-z^2}+\frac{2R}{1-z^2}\cdot\frac{1-w}{1+w}\,,
\]
and
\[
\frac{1+3\overline{\ell_R}-4|\ell_R|^2}{1-\overline{\ell_R}}=\frac{5{\rm Re}\{w\}-3(1+i {\rm Im}\{w\})}{1+w}=\frac{8{\rm Re}\{w\}}{1+w}-3\,.
\]
Thus, we get from \eqref{eq-form2}
\begin{eqnarray*}
\nonumber S_{f_0}(z)\cdot(1-z^2)^2&=& 2(1-a^2)-2\a R\b-\frac{3R^2\b^2}{2} \\
&+&\nonumber 2R^2\b\cdot \frac{1-w}{1+w}+4 R^2\b^2\cdot\frac{{\rm Re}\{w\}}{1+w}\\
&=& 2(1-a^2)+2R(R-a)\b-\frac{3R^2\b^2}{2}\,,
\end{eqnarray*}
a formula that we use to show
\begin{align}\label{eq-formschwarzian}
\nonumber|S_{f_0}(z)|& \cdot(1-|z|^2)^2\\ &=\left|2(1-a^2)+2R(R-a)\b-\frac{3R^2\b^2}{2}\right|\cdot\left(\frac{1-|z|^2}{|1-z^2|}\right)^2\,.
\end{align}
\par\smallskip
Consider a real number $\gamma$ with $0 \leq \gamma < \pi/2$ and define the curves
\[
C_\gamma=\left\{z\in\D\colon {\rm Arg}\left(\frac{1+z}{1-z}\right)=\gamma\right\}\,.
\]
Note that $C_0$ is equal to the real diameter $(-1,1)$ and for $\gamma\neq 0$, $C_\gamma$ is a circular arc passing trough the points $-1$ and $1$.
\begin{lem}\label{lem-pf1}
The quantity $|S_{f_0}(z)|\cdot(1-|z|^2)^2$ is constant on the curves $C_\gamma$, $0\leq \gamma<\pi/2$.
\end{lem}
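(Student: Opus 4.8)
The plan is to read the claim directly off the explicit formula \eqref{eq-formschwarzian} by showing that each of its two factors is constant along $C_\gamma$. The engine is the change of variable $\zeta=\frac{1+z}{1-z}$, which maps $\D$ conformally onto the right half-plane and carries each arc $C_\gamma$ to the open ray $\{\rho e^{i\gamma}\colon \rho>0\}$; hence a quantity is constant on $C_\gamma$ exactly when, expressed in terms of $\zeta$, it depends only on ${\rm Arg}\,\zeta$.

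First I would treat the geometric factor $\left(\frac{1-|z|^2}{|1-z^2|}\right)^2$. Inverting, $z=\frac{\zeta-1}{\zeta+1}$, so $1-z=\frac{2}{\zeta+1}$ and $1+z=\frac{2\zeta}{\zeta+1}$, whence $1-z^2=\frac{4\zeta}{(\zeta+1)^2}$ and $|1-z^2|=\frac{4|\zeta|}{|\zeta+1|^2}$. Writing $|z|^2=\frac{(\zeta-1)(\overline\zeta-1)}{(\zeta+1)(\overline\zeta+1)}$ and simplifying gives $1-|z|^2=\frac{4\,{\rm Re}\,\zeta}{|\zeta+1|^2}$. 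Dividing, the common factor $|\zeta+1|^{-2}$ cancels and $\frac{1-|z|^2}{|1-z^2|}=\frac{{\rm Re}\,\zeta}{|\zeta|}=\cos({\rm Arg}\,\zeta)$; on $C_\gamma$ this equals $\cos^2\gamma$, a constant depending only on $\gamma$.

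Next I would show the analytic factor $\left|2(1-a^2)+2R(R-a)\b-\frac{3R^2\b^2}{2}\right|$ is constant on $C_\gamma$; since $a$ and $R$ are fixed, it suffices that $\b$ be constant there. Recall $w=\left(\frac{1+z}{1-z}\right)^R=\zeta^R$ and $\b=w/{\rm Re}\,w$. On $C_\gamma$ write $\zeta=\rho e^{i\gamma}$ with $\rho>0$ varying along the arc, so $w=\rho^R e^{iR\gamma}$; since $0<R\le 1$ and $0\le\gamma<\pi/2$ one has $0\le R\gamma<\pi/2$, hence ${\rm Re}\,w=\rho^R\cos(R\gamma)>0$ and $\b=\frac{e^{iR\gamma}}{\cos(R\gamma)}$ depends only on $\gamma$. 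Thus both factors on the right-hand side of \eqref{eq-formschwarzian} are constant on $C_\gamma$, and the lemma follows.

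The computations are entirely elementary; the one point deserving care is that ${\rm Re}\,w$ does not vanish, so that $\b$ is well defined along $C_\gamma$, and this is exactly where the hypotheses $R\le 1$ and $\gamma<\pi/2$ are used. I do not expect any genuine obstacle once one recognizes that the substitution $\zeta=(1+z)/(1-z)$ makes both factors manifest functions of ${\rm Arg}\,\zeta$ alone.
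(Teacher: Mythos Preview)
Your proof is correct and is essentially the same as the paper's: both parametrize $C_\gamma$ via $\frac{1+z}{1-z}=t e^{i\gamma}$ (you call it $\rho e^{i\gamma}$), observe that $\beta=1+i\tan(R\gamma)=e^{iR\gamma}/\cos(R\gamma)$ and $\frac{1-|z|^2}{|1-z^2|}=\cos\gamma$ depend only on $\gamma$, and conclude from \eqref{eq-formschwarzian}. The only difference is that you spell out the algebra for the geometric factor, whereas the paper simply asserts the result of that computation.
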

\begin{pf}
Take any $\gamma\in [0,\pi/2)$ and let $z\in C_\gamma$. Then there exists a (positive) real number $t$ such that
\[
\frac{1+z}{1-z}=te^{i\gamma}\,.
\]
A straightforward computation gives
\[
\b=\frac{t^R e^{iR \gamma}}{t^R\,\cos(R\gamma)} = 1+i\tan(R\gamma)\quad \text{and}\quad \frac{1-|z|^2}{|1-z^2|}=\cos\gamma\,.
\]
The proof of the lemma follows from \eqref{eq-formschwarzian}.
\end{pf}
\par\smallskip
Lemma~\ref{lem-pf1} shows, in particular, that whenever $r\in (0,1)$,
$|S_{f_0}(r)|(1-r^2)^2=|S_{f_0}(0)|\equiv\lambda\,.$
\par\smallskip
Note that $S_{f_0}(\overline z)=\overline{S_{f_0}(z)}$ for all $z\in\D$. Moreover, it is easy to check that any radius $\{re^{i\theta},\ 0<r<1\}$, with $0<\theta<\pi$, intersects every $C_\gamma$ with $\gamma>0$ at exactly one point. Therefore, we conclude that
\begin{equation}\label{eq-imag}
\|S_{f_0}\|=\sup_{0\leq r<1} |S_{f_0}(ir)|\cdot(1-r^2)^2\,.
\end{equation}
\begin{lem}\label{lem-pf2}
The Schwarzian norm of $f_0$ equals
\[
\|S_{f_0}\|=|S_{f_0}(0)|=\lambda\,.
\]
\end{lem}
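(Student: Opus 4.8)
The plan is to read off $\|S_{f_0}\|$ from the closed formula \eqref{eq-formschwarzian} via \eqref{eq-imag} and Lemma~\ref{lem-pf1}. By Lemma~\ref{lem-pf1} the quantity $|S_{f_0}(z)|(1-|z|^2)^2$ is constant on each curve $C_\gamma$; call this constant $g(\gamma)$, $0\le\gamma<\pi/2$. Then \eqref{eq-imag} gives $\|S_{f_0}\|=\sup_{0\le\gamma<\pi/2}g(\gamma)$, and since $g(0)=|S_{f_0}(0)|=\lambda$ was already checked in Section~\ref{sec-dilat} (the computation preceding Lemma~\ref{lem-pf1}), it suffices to prove $g(\gamma)\le\lambda$ for every $\gamma\in[0,\pi/2)$. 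On $C_\gamma$ one has $\beta=1+i\tan(R\gamma)$ and $(1-|z|^2)/|1-z^2|=\cos\gamma$, so by \eqref{eq-formschwarzian}, $g(\gamma)=|P(\beta)|\cos^2\gamma$ where $P(\beta)=2(1-a^2)+2R(R-a)\beta-\tfrac32R^2\beta^2$.

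The next step is to make $g(\gamma)^2$ explicit. Since $a+\tfrac R2=\sqrt{\lambda/2+1+R^2/2}$, one has immediately the two identities $2-2a^2+\tfrac12R^2-2aR=-\lambda$ and $R^2(R+2a)^2=2R^2(\lambda+2+R^2)$. Substituting $\beta=1+i\tan(R\gamma)$ and separating real and imaginary parts, these identities turn $P(\beta)$ into $\big(-\lambda+\tfrac32R^2\tan^2(R\gamma)\big)-iR(R+2a)\tan(R\gamma)$, and therefore
\[
g(\gamma)^2=\Big[\lambda^2+R^2(4+2R^2-\lambda)\tan^2(R\gamma)+\tfrac94R^4\tan^4(R\gamma)\Big]\cos^4\gamma .
\]

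Now split into two cases. If $\lambda\ge 3/2$, then $R=R_\lambda=1$ by Theorem~\ref{prop-dilat}, so $\tan(R\gamma)=\tan\gamma$; writing $\lambda^2=\lambda^2(\cos^2\gamma+\sin^2\gamma)^2$, expanding, and dividing by $\sin^2\gamma$ (the case $\gamma=0$ being trivial), the inequality $g(\gamma)^2\le\lambda^2$ reduces to
\[
\Big(\lambda-\tfrac32\Big)\Big[2(\lambda+2)\cos^2\gamma+\big(\lambda+\tfrac32\big)\sin^2\gamma\Big]\ge 0 ,
\]
which holds precisely because $\lambda\ge 3/2$ (with equality only when $\lambda=3/2$ or $\gamma=0$).

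The remaining case $0<\lambda<3/2$, where $R=R_\lambda\in(0,1)$, is the crux. Here the mismatch between the arguments $R\gamma$ and $\gamma$ rules out a reduction to a polynomial inequality in one variable, and a crude estimate such as $\cos\gamma\le\cos(R\gamma)$ is too wasteful — it would force $\tfrac32R^2\le\lambda$, contradicting \eqref{eq-upperR}. Instead I would show that $\gamma\mapsto g(\gamma)$ is non-increasing on $[0,\pi/2)$, i.e. that $\frac{d}{d\gamma}\log g(\gamma)^2\le 0$, which after differentiation amounts to
\[
\frac{2R\tan(R\gamma)\sec^2(R\gamma)\Big[R^2(4+2R^2-\lambda)+\tfrac92R^4\tan^2(R\gamma)\Big]}{\lambda^2+R^2(4+2R^2-\lambda)\tan^2(R\gamma)+\tfrac94R^4\tan^4(R\gamma)}\le 4\tan\gamma .
\]
The main obstacle is establishing this last inequality. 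Since it fails for values of $R$ that are merely known to lie in $(\sqrt{2\lambda/3},1)$, the argument must exploit the exact value of $R=R_\lambda$; I expect it to hinge on the characterization of $R_\lambda$ developed in Section~\ref{sec-dilat} (the role of $R_\lambda$ as the supremum of hyperbolic norms of admissible dilatations, together with \eqref{eq-upperR}), possibly coupled with a comparison argument that simultaneously sharpens what is known about $R_\lambda$ in this range, after which the inequality becomes an elementary if lengthy verification on $[0,\pi/2)$. Once $g(\gamma)\le\lambda$ is established in both cases, the lemma follows from $g(0)=\lambda$, and in particular $f_0\in\mathcal F_\lambda^0$.
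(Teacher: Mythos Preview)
Your reduction via Lemma~\ref{lem-pf1} and \eqref{eq-imag}, your closed formula for $g(\gamma)^2$, and your treatment of the case $\lambda\ge 3/2$ are correct and parallel the paper's argument closely: the paper likewise writes $\Psi(r)=g(\gamma_r)^2$ in the form $(\widetilde A+\widetilde B|\beta_r|^2+\widetilde C|\beta_r|^4)\cos^4\gamma_r$ (equivalently your bracket in $\tan^2(R\gamma)$), and for $R=1$ reduces $\Psi(r)\le\Psi(0)$ to an elementary study of the quadratic $x\mapsto\widetilde Ax^2+\widetilde Bx+\widetilde C$ on $(0,1]$, which is your polynomial inequality in different clothing.

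The genuine gap is the case $0<\lambda<3/2$, where you write down the differentiated inequality and then only sketch an expectation. The paper does supply a full argument here, and it is precisely the monotonicity $\Psi'(r)\le 0$ on $(0,1)$ that is established, in three steps. First, one factors
\[
\Psi'(r)=-\gamma_r'\cdot\frac{\cos^4\gamma_r\,\tan\gamma_r}{\cos(R\gamma_r)}\cdot B(r),\qquad B(r)=4\widetilde A\cos(R\gamma_r)+2\widetilde B|\beta_r|(1+\varphi_r)+4\widetilde C\varphi_r|\beta_r|^3,
\]
with $\varphi_r=1-R\tan(R\gamma_r)/\tan\gamma_r$; using $\widetilde A<0$ and $\widetilde B,\widetilde C>0$ (consequences of $a<1$ and $R<1$) together with the fact that $\varphi_r$ is increasing, one shows $B$ is increasing and deduces $\Psi'<0$ on an initial interval $(0,r_1)$. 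Second, from the expansion \eqref{nueva} and $K=R^2(4+2R^2-\lambda)>0$ one checks $\Psi'<0$ on a terminal interval $(r_2,1)$. Third, since $B$ is increasing it has at most one zero, so $\Psi'$ has at most one zero on $(0,1)$; combined with the first two steps this forces $\Psi'\le 0$ throughout. Contrary to your expectation, the paper's proof does not invoke any property of $R_\lambda$ beyond $R_\lambda<1$ (Theorem~\ref{prop-dilat}) and $R_\lambda>\sqrt{2\lambda/3}$ (inequality \eqref{eq-upperR}), the latter being exactly what gives $a<1$. What your proposal is missing, then, is not additional information about $R_\lambda$ but the monotonicity analysis of $\varphi_r$ and $B(r)$ that converts your differentiated inequality into an actual proof.
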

\begin{pf}
According to \eqref{eq-imag}, it suffices to show
\[
\sup_{0\leq r<1} |S_{f_0}(ir)|\cdot(1-r^2)^2=|S_{f_0}(0)|\,.
\]
We use \eqref{eq-formschwarzian} to write
\begin{align}
\nonumber\Phi(r)=|S_{f_0}(ir)|& \cdot(1-|r|^2)^2\\
\label{eq-p1} &=\left|2(1-a^2)+2R(R-a)\b_r-\frac{3R^2\b_r^2}{2}\right|\cdot\left(\frac{1-r^2}{1+r^2}\right)^2\\
&\nonumber =\label{eq-pf2}|\phi\circ\b_r|\cdot\left(\frac{1-r^2}{1+r^2}\right)^2\,,
\end{align}
where $\phi(x)=A+Bx+Cx^2$, with $A=2(1-a^2)$, $B=2R(R-a)$, and $C=-3R^2/2$; and, by \eqref{eq-wybeta},
\begin{equation}\label{eq-betaygamma}
\b_r=1+i\tan(R\gamma_r) \quad\text{with}\quad \cos \gamma_r=(1-r^2)/(1+r^2)\,.
\end{equation}
We are to check
\begin{equation}\label{eq-sup}
\sup\{ \Phi(r)\colon 0\leq r <1\}=\Phi(0)\,.
\end{equation}
Instead of proving \eqref{eq-sup}, we consider the equivalent problem of showing
\begin{equation}\label{eq-sup2}
\sup\{ \Phi^2(r)\colon 0\leq r <1\}=\Phi^2(0)\,.
\end{equation}
The advantage of this new reformulation is that, as the reader may check, we can write
\[
|\phi\circ\b_r|^2=\widetilde{A}+\widetilde{B}|\b_r|^2+\widetilde{C}|\b_r|^4\,,
\]
with $\widetilde{A}=A^2+2AB+4AC$, $\widetilde{B}=B^2+2BC-2AC$, and $\widetilde{C}=C^2$. Note that $\widetilde{A}+\widetilde{B}+\widetilde{C}=\lambda^2$. In fact,
\begin{equation}\label{eq-Atilde}
\widetilde{A}=4(1-a^2)\left(1-(a+R)^2\right)
\end{equation}
 (with $a+R>1$, by the definition of $a$),
\begin{equation}\label{eq-BCtilde}
\widetilde{B}=2R^2(3-aR-a^2-R^2)\,,\quad \text{and}\quad \widetilde{C}=\frac{9R^4}{4}\,.
\end{equation}
\par
Notice that by (\ref{eq-betaygamma}), the mapping $r\rightarrow\gamma_r$ is increasing
for $r\in(0,1)$, and that $|\beta_r|=1/\cos(R\gamma_r)$.
\par
Let us rename $\Psi(r)=\Phi^2(r)$.  To prove \eqref{eq-sup2}, we distinguish among the following three cases.
\par\medskip
{\bf (i)}$\, \mathbf{\boldsymbol{\lambda}= 3/2.}$ In this case, by Theorem~\ref{prop-dilat}, $R=1$. We also have $a=1$, which gives $A=B=0$, $C=-3/2$ and therefore, $\widetilde{A}=\widetilde{B}=0$, $\widetilde{C}=9/4$. Hence
\[
\Psi(r)= \left(\widetilde{A}+\widetilde{B}|\b_r|^2+\widetilde{C}|\b_r|^4\right)\cdot\cos^4\gamma_r=\frac 94\cdot \frac{1}{\cos^4\gamma_r}\cdot\cos^4\gamma_r=\frac 94=\lambda^2\,.
\]
This proves that \eqref{eq-sup2} holds for $\lambda=3/2$.
\par\medskip
{\bf (ii)}$\, \mathbf{\boldsymbol{\lambda}> 3/2.}$  By Theorem~\ref{prop-dilat}, we have $R=1$. A straightforward calculation shows that $a>1$. Hence, using \eqref{eq-Atilde} and that $a+R>1$, we obtain $\widetilde{A}>0$. Now, since $|\beta_r|=1/\cos\gamma_r$, we can write
\begin{align*}
\nonumber\Psi(r)&=\left(\widetilde{A}+\widetilde{B}|\b|^2+\widetilde{C}|\b|^4\right)\cdot \cos^4\gamma_r
= \left(\frac{\widetilde{A}}{|\b|^4}+\frac{\widetilde{B}}{|\b|^2}+\widetilde{C}\right)\\
& =\widetilde{A}\cos^4(\gamma_r)+\widetilde{B}\cos^2(\gamma_r)+\widetilde{C}\,.
\end{align*}
Consider the function $\psi(x)=\widetilde{A}x^2+\widetilde{B}x+\widetilde{C},\ x\in (0,1)$. Note that $\Psi(r)=\psi(\cos^2(\gamma_r))$ and that $\Psi(r)\leq \Psi(0)$ if and only if $\psi(x)\leq \psi(1)$. To show that $\psi(x)\leq \psi(1)$ we argue as follows: the graph of $\psi$ is a convex parabola and $\psi(1)=\lambda^2> 9/4=\psi(0)$ since $\lambda > 3/2$. Hence, the unique critical point $x_0$ of $\psi$ is a minimum and satisfies $x_0< 1$. We have the following possibilities.
\begin{itemize}
\item[(i)] The critical point $x_0\leq 0$, which implies that for all $r\in [0,1)$, $\psi(r)\leq \psi(1)=\lambda^2$.
\item[(ii)] The critical point $x_0$ which is a minimum of $\psi$ belongs to $(0,1)$. In this case, we have
\[
\sup_{0\leq x\leq 1}\psi(x)=\max\{\psi(0), \psi(1)\}=\max\left\{\frac 94, \lambda^2\right\}=\lambda^2=\psi(1)\,,
\]
which shows that  \eqref{eq-sup2} holds also for $\lambda>3/2$.
\end{itemize}
\par\medskip
{\bf (iii)}$\, \mathbf{\boldsymbol{\lambda}< 3/2.}$  This is the most complicated case to analyze by far due to the amount of parameters that we are to control. Note that we have that $R<1$ by Theorem~\ref{prop-dilat}, and that $a<1$ by \eqref{eq-upperR}. Hence, using \eqref{eq-Atilde} and \eqref{eq-BCtilde}, we have that $\widetilde{A}<0$, while $\widetilde{B},\ \widetilde{C}>0$. Recall that $\beta_r=1+i\tan(R\gamma_r)$ and that $\cos\gamma_r=(1-r^2)/(1+r^2)$, which gives that $|\beta_r|=1/\cos(R\gamma_r)$ and that the correspondence $r\to\gamma_r$ is an (strictly) increasing function in $r\in (0,1)$ (and hence $\gamma_r'=\partial\gamma_r/\partial r>0$ for all such $r$).
\par
We are to show that $\sup_{0\leq r < 1} \Psi(r)=\Psi(0)$. To do so, we will check that the derivative of $\Psi$ is non-positive for all such $r$. For the convenience of the reader, we proceed in different steps.
\par
\par\smallskip
\textbf{Step 1: There exists $r_1\in (0,1)$ such that $\Psi'(r)< 0$ for all $r\in(0,r_1)$.}
As it was mentioned before, we can write
\[
\Psi(r)=\left(\widetilde{A}+\widetilde{B}|\b_r|^2+\widetilde{C}|\b_r|^4\right)\cdot \cos^4\gamma_r\,.
\]
Using this expression for $\Psi$, we compute its derivative to obtain
\begin{align*}
\nonumber\Psi'(r)&=\left(2\widetilde{B}|\b_r|+4\widetilde{C}|\b_r|^3\right)\cdot \frac{\sin(R\gamma_r)}{\cos^2(R\gamma_r)}\cdot R\gamma_r'\cdot \cos^4\gamma_r\\
& - \left(\widetilde{A}+\widetilde{B}|\b_r|^2+\widetilde{C}|\b_r|^4\right)\cdot 4\cos^3\gamma_r\cdot\sin\gamma_r\cdot \gamma_r'\,,
\end{align*}
which can be written as
\begin{align}\label{eq-derivative}
\Psi'(r)&=-\gamma_r'\cdot \frac{\cos^4\gamma_r\cdot\tan\gamma_r}{\cos(R\gamma_r)}\\
&\nonumber \times \left(4\widetilde{A}\cos(R\gamma_r)+2\widetilde{B}|\beta_r|+2\widetilde{B}\varphi_r|\beta_r|+4\widetilde{C}\varphi_r |\beta_r|^3\right)\,,
\end{align}
where
\[
\varphi_r=1-\frac{R\tan(R\gamma_r)}{\tan\gamma_r}\,.
\]
Let us see that $\varphi_r$ is an (strictly) increasing function of $r\in (0, 1)$. To do so, we compute the derivative of this function which is
\[
\varphi_r'=-\frac{R\gamma_r'}{\tan^2\gamma_r\cdot\cos\gamma_r\cdot\cos(R\gamma_r)}\cdot\left(\frac{R\sin\gamma_r}{\cos(R\gamma_r)}-\frac{\sin(R\gamma_r)}{\cos\gamma_r}\right).
\]
Now, $\varphi_r$ is increasing if and only if
\begin{equation}\label{eq-mu}
\frac{R\sin\gamma_r}{\cos(R\gamma_r)}<\frac{\sin(R\gamma_r)}{\cos\gamma_r}\quad \Leftrightarrow\quad R\sin(2\gamma_r)-\sin(2R\gamma_r)<0\,.
\end{equation}
Define $\mu(r)=R\sin(2\gamma_r)-\sin(2R\gamma_r),\ r\in(0,1)$. Then $\mu(0)=0$ and $$\mu'(r)=2R(\cos(2\gamma_r)-\cos(2R\gamma_r))\cdot\gamma_r'<0$$ since $0<R<1$, which gives \eqref{eq-mu} and shows that $\varphi_r$ is increasing for all $r\in(0,1)$.
\par
It is also clear that $\cos(R\gamma_r)$ is decreasing in $r$ so that $1/\cos(R\gamma_r)=|\beta_r|$ is increasing as well. Hence, the expression in the parentheses in \eqref{eq-derivative} is an increasing function of $r\in (0,1)$ (recall that $\widetilde{A}<0$ and that $\widetilde{B},\ \widetilde{C}>0$). On the other hand, $\cos^4\gamma_r\cdot\tan\gamma_r$ increases with $r$ for all $0<r<r_1$, where $\sin(\gamma_{r_1})=1/2$. This shows that as long as $0<r<r_1$, the function
\[
\widetilde{\varphi}(r)=-\frac{\cos^4\gamma_r\cdot\tan\gamma_r}{\cos(R\gamma_r)}
\cdot \left(4\widetilde{A}\cos(R\gamma_r)+2\widetilde{B}|\beta_r|+2\widetilde{B}\varphi_r|\beta_r|+4\widetilde{C}\varphi_r |\beta_r|^3\right)
\]
is decreasing. Since $\widetilde{\varphi}(0)=0$, we get that $\widetilde{\varphi}(r)< 0$ for all $r\in(0, r_1)$. In other words, keeping in mind that $\gamma'_r>0$ for all $r\in(0,1)$, we conclude from \eqref{eq-derivative} that $\Psi'(r)< 0$ for $r\in(0, r_1)$. (This implies, in particular, that $r=0$ is a local maximum of $\Psi$.)
\par
\par\smallskip
\textbf{Step 2: There exists $r_2\in (0,1)$ such that $\Psi'(r)< 0$ for all $r\in(r_2,1)$.} Recall that
\[
\lambda=2 a^2 + 2a R -2-R^2/2\,.
\]
Using \eqref{eq-p1} and \eqref{eq-betaygamma}, we get
\begin{align*}
\Psi(r)& =\left|2(1-a^2)+2R(R-a)\b_r-\frac{3R^2\b_r^2}{2}\right|^2\cdot\left(\frac{1-r^2}{1+r^2}\right)^4\\
&= \left({\rm Re}\left\{2(1-a^2)+2R(R-a)\b_r-\frac{3R^2\b_r^2}{2}\right\}\right)^2\cdot \cos^4\gamma_r\\
& +\left({\rm Im}\left\{2(1-a^2)+2R(R-a)\b_r-\frac{3R^2\b_r^2}{2}\right\}\right)^2\cdot \cos^4\gamma_r\\
&=\left(-\lambda+\frac{3R^2}{2}\cdot \tan^2(R\gamma_r)\right)^2\cdot \cos^4\gamma_r \\
& +\left(R^2+2aR\right)^2\cdot\tan^2(R\gamma_r) \cdot \cos^4\gamma_r\end{align*}
\begin{equation}\label{nueva}\hspace{1cm} = \lambda^2\cdot\cos^4\gamma_r+K\cdot\tan^2(R\gamma_r)\cdot\cos^4\gamma_r+\frac{9R^4}{4}\cdot\tan^4(R\gamma_r)\cdot\cos^4\gamma_r\,,
\end{equation}
where
\begin{eqnarray*}
K&=&R^4+4a^2R^2+4aR^3-3R^2\lambda=\frac{5R^4}{2}+6R^2-2a^2R^2-2aR^3\\
&\geq& \frac{5R^4}{2}+2R^2> 0\,,
\end{eqnarray*}
since $a$ and $R$ are less than one. Using \eqref{nueva} we see that the derivative of $\Psi$ with respect to $r$ equals
\begin{align*}
\Psi'(r)& =-4\lambda^2\cdot\cos^3\gamma_r\cdot\sin\gamma_r\cdot\gamma_r'+K\cdot\frac{2R\tan(R\gamma_r)}{\cos^2(R\gamma_r)}\cdot\cos^4\gamma_r\cdot\gamma_r'\\
& -4K\cdot\cos^3\gamma_r\cdot \sin\gamma_r\cdot\tan^2(R\gamma_r)\cdot\gamma_r'+9R^5\cdot \frac{\tan^3(R\gamma_r)}{\cos^2(R\gamma_r)}\cdot\cos^4\gamma_r\cdot\gamma_r'\\
& -9R^4\tan^4(R\gamma_r) \cdot\cos^3\gamma_r\cdot\sin\gamma_r\cdot\gamma_r'\,.
\end{align*}
Note that $\Psi'(r)<0$ if and only if
\begin{align}\label{eq-derivativesign}
\nonumber & K\cdot\frac{2R\tan(R\gamma_r)}{\cos^2(R\gamma_r)}\cdot\cos\gamma_r+9R^5\cdot \frac{\tan^3(R\gamma_r)}{\cos^2(R\gamma_r)}\cdot\cos\gamma_r\\
& < 4\lambda^2\cdot\sin\gamma_r+4K\cdot \sin\gamma_r\cdot\tan^2(R\gamma_r)+9R^4\tan^4(R\gamma_r) \cdot\sin\gamma_r\,,
\end{align}
an inequality the holds for all $r\geq r_2$, say, since as $r\to 1$ (or, equivalently, as $\gamma_r\to \pi/2$), the right hand side in \eqref{eq-derivativesign} goes to $0$ while the left hand side tend to a strictly positive real number. In other words, this shows that $\Psi'(r)<0$ for all $r\in (r_2,1)$.
\par
\par\smallskip
\textbf{Step 3: $\Psi$ has at most one critical point in $(0,1)$.} Note that once we check that the number of solutions of the equation $\Psi'(r)=0$, $r\in (0,1)$ is at most one, we will have, as a consequence of the previous steps that $\Psi'(r)\leq 0$ for $r\in(0,1)$ (and hence $\Psi$ is non-increasing in that interval). Observe that $\Psi'=0$ if and only if the function in the parentheses in \eqref{eq-derivative} is equal to zero. As was justified in Step 1, this function is increasing, which proves our claim.
\end{pf}
\par\smallskip
We summarize the previous analysis in the following proposition.
\begin{prop}\label{prop-extremal}
If $\lambda>0$ then the function $f_0\in\mathcal F_\lambda^0$.
\end{prop}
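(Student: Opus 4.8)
The plan is to collect the facts already established about the mapping $f_0=h_0+\overline{g_0}$ defined by \eqref{eq-function}, and to verify the three requirements for membership in $\mathcal{F}_\lambda^0$: the normalization $h_0(0)=g_0(0)=0$, $h_0'(0)=1$, $g_0'(0)=0$; that $f_0$ is a sense-preserving (hence locally univalent) harmonic mapping, so that $S_{f_0}$ is given by \eqref{eq-Schwarzian}; and that $\|S_{f_0}\|\le\lambda$. The normalization is immediate from the system \eqref{eq-function}: from $h_0-g_0=\varphi_a$ and $h_0(0)=g_0(0)=0$ we get $h_0'(0)-g_0'(0)=\varphi_a'(0)=1$, while $g_0'(0)=\omega_0(0)h_0'(0)=\ell_R(0)h_0'(0)=0$, so $h_0'(0)=1$ and $g_0'(0)=0$.

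Next I would check that $f_0$ is sense-preserving and locally univalent. Its dilatation is $\omega_0=\ell_R$ for $0<R<1$ and $\omega_0=I$ for $R=1$; in either case $\omega_0$ is an analytic self-map of $\D$ with $|\omega_0(z)|<1$ throughout $\D$. From $h_0'(1-\ell_R)=\varphi_a'$, together with $\varphi_a'(z)=((1+z)/(1-z))^{a-1}(1-z)^{-2}$ being zero-free in $\D$ and $1-\ell_R$ being zero-free because $\ell_R(\D)\subset\D$, we conclude $h_0'$ never vanishes. Hence the Jacobian $|h_0'|^2(1-|\omega_0|^2)$ is positive on $\D$, as needed.

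It remains to prove $\|S_{f_0}\|\le\lambda$, which is the substance of the construction and is exactly Lemmas~\ref{lem-pf1} and~\ref{lem-pf2}. The computation preceding those lemmas already gives $|S_{f_0}(0)|=\lambda$: since $\psi^\prime<0$ and $a>\sqrt{1+R^2/2}-R/2$ one has $\psi(a)<0$, hence $|Sh_0(0)|=2a^2+2aR-2-R^2/2=\lambda$, and $\omega_0(0)=0$ yields $|S_{f_0}(0)|=|Sh_0(0)|$. So only the upper bound is needed. Lemma~\ref{lem-pf1} shows $|S_{f_0}(z)|(1-|z|^2)^2$ is constant on each arc $C_\gamma$; combining this with the symmetry $S_{f_0}(\overline z)=\overline{S_{f_0}(z)}$ and the fact that each radius in the open upper half-disk meets every $C_\gamma$, $\gamma>0$, at a single point reduces the supremum over $\D$ to the one-variable problem \eqref{eq-sup}. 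Lemma~\ref{lem-pf2} then disposes of \eqref{eq-sup}, equivalently \eqref{eq-sup2} for $\Psi=\Phi^2$, in the regimes $\lambda=3/2$ (where $\Psi\equiv 9/4$), $\lambda>3/2$ (where $\Psi$ equals the convex parabola $\widetilde{A}x^2+\widetilde{B}x+\widetilde{C}$ in $x=\cos^2\gamma_r\in(0,1]$, with maximum $\lambda^2$ at $x=1$), and $\lambda<3/2$.

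I expect $\lambda<3/2$ to be the real obstacle, since then $R<1$ and $a<1$, so $\widetilde{A}<0$ while $\widetilde{B},\widetilde{C}>0$ and the sign of $\Psi'$ is delicate. The argument runs in three steps: (1) $\Psi'<0$ on an interval $(0,r_1)$, using that the bracketed factor in \eqref{eq-derivative} is increasing — because $|\beta_r|=1/\cos(R\gamma_r)$ is increasing and $\varphi_r$ is increasing, the latter via $\mu(r)=R\sin(2\gamma_r)-\sin(2R\gamma_r)<0$ — while $\cos^4\gamma_r\tan\gamma_r$ is increasing on $(0,r_1)$, where $\sin\gamma_{r_1}=1/2$; (2) $\Psi'<0$ on an interval $(r_2,1)$, read off from the asymptotics of \eqref{eq-derivativesign} as $\gamma_r\to\pi/2$; (3) $\Psi$ has at most one critical point on $(0,1)$, again because the bracketed factor in \eqref{eq-derivative} is monotone, so it has a single zero. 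Steps (1)--(3) force $\Psi'\le 0$ on all of $(0,1)$, whence $\sup_{0\le r<1}\Psi(r)=\Psi(0)=\lambda^2$. Therefore $\|S_{f_0}\|=|S_{f_0}(0)|=\lambda$, and $f_0\in\mathcal{F}_\lambda^0$.
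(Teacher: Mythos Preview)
Your proposal is correct and follows exactly the paper's own approach: Proposition~\ref{prop-extremal} is stated as a summary of the preceding analysis in Section~5.2, and you have accurately collected those ingredients---the normalizations from \eqref{eq-function}, the sense-preserving/local univalence check, the computation $|S_{f_0}(0)|=\lambda$, the reduction via Lemma~\ref{lem-pf1} to the imaginary axis, and the three-case analysis in Lemma~\ref{lem-pf2} (including the three-step monotonicity argument for $\lambda<3/2$). There is nothing to add or correct.
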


This shows that \eqref{eq-ineq1} and \eqref{eq-ineq2} are equalities, and finishes the proof of Theorem 1.

\end{document}